\numberwithin{equation}{section}
\theoremstyle{plain}
\newtheorem{thm}{\protect\theoremname}[section]
  \theoremstyle{plain}
  \newtheorem{cor}[thm]{\protect\corollaryname}
  \theoremstyle{plain}
  \newtheorem{lem}[thm]{\protect\lemmaname}
  \theoremstyle{plain}
  \newtheorem{prop}[thm]{\protect\propositionname}
  \theoremstyle{remark}
  \newtheorem{rem}[thm]{\protect\remarkname}
  \theoremstyle{remark}
  \newtheorem{claim}[thm]{\protect\claimname}
  \theoremstyle{remark}
  \newtheorem{exa}[thm]{\protect\examplename}
  \providecommand{\claimname}{Claim}
  \providecommand{\corollaryname}{Corollary}
  \providecommand{\lemmaname}{Lemma}
  \providecommand{\propositionname}{Proposition}
  \providecommand{\remarkname}{Remark}
  \providecommand{\examplename}{Example}
\providecommand{\theoremname}{Theorem}
\begin{document}

\title{Symmetric and Exterior Squares of Hook Representations}

\author{Szabolcs Mészáros, János Wolosz}
\begin{abstract}
We determine the multiplicities of irreducible summands in the symmetric and the exterior squares of hook representations of symmetric groups over a field of characteristic zero. 
\end{abstract}

\subjclass[2010]{20C30, 05E10 (primary), 20B30 (secondary).}

\keywords{symmetric groups, Young tableaux, symmetric product.}

\thanks{This research was partially supported by National Research, Development and Innovation Office, NKFIH grants no. K115799 and K119934.}

\address{Eötvös Loránd University, 1053 Budapest, Egyetem tér 1-3, Hungary}

\email{janos.wolosz@gmail.com}

\address{MTA Rényi Institute, 1053 Budapest, Reáltanoda utca 13-15, Hungary}

\email{meszaros.szabolcs@renyi.hu}
\maketitle

\section{Introduction}

Let $\mathbb{F}$ be a field of characteristic zero and $n$ a positive integer. 
Denote by $M^{\lambda}$ the irreducible right $\mathbb{F}$-representation of the symmetric group $\mathfrak{S}_{n}$ corresponding to the Young diagram $\lambda\vdash n$ 
(i.e. $\lambda$ is a finite, non-increasing sequence of positive integers that add up to $n$). The multiplicities of irreducibles in $M^{\lambda}\otimes M^{\mu}$ are called Kronecker coefficients, their study is an active area of research, see \cite{Blasiak},\cite{Rosas2},\cite{Hayashi},\cite{SamSnowden}.

Let $V=M^{(n-1,1)}$ be the standard $\mathfrak{S}_n$-representation of degree $n-1$ over $\mathbb{F}$. In \cite[Thm. 1.2]{Remmel1} J. B. Remmel determined the multiplicities of irreducible summands of $\Lambda^k V \otimes \Lambda^k V$, for all $n,k\in\mathbb{N}^{+}$. (The statement is spelled out in detail in Subsection \ref{subsec:Remmel's-theorem}).
The representations of the form $\Lambda^k V$ are called hook representations, since $\Lambda^{k}V\cong M^{(n-k,1^{k})}$, so the Young diagram of $\Lambda^k V$ resembles a hook.
The factors appearing in the decomposition are either hook representations themselves i.e. $\lambda_{2}\leq 1$ or "double hook" representations i.e. $\lambda_{3}\leq2$ (but $\lambda_{2}>1$).
In this paper, we refine this decomposition by separating the summands of the symmetric and antisymmetric components. In other words, we determine the multiplicites of irreducible summands in the representations $\mathrm{Sym}^{2}(\Lambda^{k}V)$ and $\Lambda^{2}(\Lambda^{k}V)$.

Consider $\mathrm{Sym}^{2}(\Lambda^{k}V)$ and $\Lambda^{2}(\Lambda^{k}V)$
as complementary subspaces of $(\Lambda^{k}V)^{\otimes2}$, and denote by $(q,p,2^{d_2},1^{d_1})$ the Young diagram 
$(q,p,\underbrace{2,\dots,2}_{d_2},\underbrace{1,\dots,1}_{d_1})\vdash n$.

\vspace{-12pt}We show the following:
\begin{thm}
\label{thm:main}
Let $M^{\lambda}$ be an irreducible summand of $(\Lambda^{k}V)^{\otimes2}$ for some $\lambda\vdash n$.
\begin{itemize}
\item \setlength\itemsep{4pt}If $\lambda=(q,p,2^{d_{2}},1^{d_{1}})$ where
$q\geq p\geq2$ and
\begin{itemize}
\item \setlength\itemsep{4pt}$d_{1}\overset{\vphantom{k}}{\equiv}0\ mod \ 4$,
then every $M^{\lambda}$ factor is contained in $\mathrm{Sym}^{2}(\Lambda^{k}V)$,
\item $d_{1}\equiv2\ mod \ 4$, 
then every $M^{\lambda}$ factor is contained
in $\Lambda^{2}(\Lambda^{k}V)$,
\item $2\nmid d_{1}$, 
then the multiplicity of $M^{\lambda}$ is $1$
in $\mathrm{Sym}^{2}(\Lambda^{k}V)$ and in $\Lambda^{2}(\Lambda^{k}V)$,
\end{itemize}
\item If $\lambda=(n-m,1^{m})$ where $0\leq m\leq n-1$ and 
\begin{itemize}
\item \setlength\itemsep{4pt}$m\overset{\vphantom{k}}{\equiv}0\textrm{ or }1\ mod \ 4$,
then every $M^{\lambda}$ factor is contained in $\mathrm{Sym}^{2}(\Lambda^{k}V)$,
\item $m\equiv2\textrm{ or }3\ mod \ 4$, 
then every $M^{\lambda}$ factor is
contained in $\Lambda^{2}(\Lambda^{k}V)$.
\end{itemize}
\end{itemize}
\end{thm}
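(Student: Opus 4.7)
The plan is to reduce Theorem \ref{thm:main} to a plethystic computation in the ring of symmetric functions, building on Remmel's theorem. Write $m_\lambda = [M^\lambda : (\Lambda^k V)^{\otimes 2}]$, which is known by Remmel, and
\[
c_\lambda \;=\; [M^\lambda : \mathrm{Sym}^2(\Lambda^k V)] \;-\; [M^\lambda : \Lambda^2(\Lambda^k V)].
\]
The multiplicities sought are then $(m_\lambda + c_\lambda)/2$ and $(m_\lambda - c_\lambda)/2$, so the whole task is to determine $c_\lambda$.

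The classical identity $\chi_{\mathrm{Sym}^2 W}(g) - \chi_{\Lambda^2 W}(g) = \chi_W(g^2)$ translates, under the Frobenius characteristic, to
\[
\mathrm{ch}\bigl(\mathrm{Sym}^2 W\bigr) \;-\; \mathrm{ch}\bigl(\Lambda^2 W\bigr) \;=\; p_2\bigl[\mathrm{ch}(W)\bigr],
\]
where $p_2[\,\cdot\,]$ denotes plethystic substitution. For $W=\Lambda^k V$ this means that $c_\lambda$ is the coefficient of the Schur function $s_\lambda$ in $p_2[s_{(n-k,1^k)}]$. To compute this I would use the hook Jacobi--Trudi expansion
\[
s_{(n-k,1^k)} \;=\; \sum_{j=0}^{k}(-1)^{j}\,h_{n-k+j}\,e_{k-j},
\]
the multiplicativity $p_2[fg] = p_2[f]\,p_2[g]$, and the classical Schur expansions of $p_2[h_m]$ and $p_2[e_m]$ (Littlewood's identities, which express these as signed sums over partitions of $2m$ with empty $2$-core, indexed by the $2$-quotient). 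After expanding the resulting products by Pieri's rule one collects the coefficient of $s_\lambda$ for $\lambda$ a hook or a double hook --- the only shapes permitted by Remmel's theorem --- to obtain $c_\lambda$, and then reads off the two multiplicities via $(m_\lambda\pm c_\lambda)/2$.

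The main obstacle will be sign bookkeeping: the alternating sum over $j$ and the signs produced by $p_2[h_m]$ and $p_2[e_m]$ must combine so that $c_\lambda$ collapses to the clean $\mathrm{mod}\ 4$ dichotomy asserted in the theorem. Concretely, I expect $c_\lambda = (-1)^{d_1/2}\, m_\lambda$ for double hooks with $d_1$ even and $c_\lambda = 0$ for $d_1$ odd (consistent with the multiplicity-two case in Remmel's theorem), and $c_\lambda = (-1)^{\lfloor m/2\rfloor}\, m_\lambda$ for hooks. A potential shortcut that could avoid the full plethystic expansion is to evaluate the virtual character $g \mapsto \chi_{\Lambda^k V}(g^2)$ on a single well-chosen family of elements --- for example permutations all of whose cycles have length in $\{2,4\}$ --- and to match coefficients against known character formulas for hooks and double hooks; this could produce the $\mathrm{mod}\ 4$ parity cleanly without tracking every Littlewood--Richardson contribution.
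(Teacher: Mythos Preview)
Your starting point, the character identity $\chi_{\mathrm{Sym}^2 W}(g)-\chi_{\Lambda^2 W}(g)=\chi_W(g^2)$, is correct and natural. But the step ``translates, under the Frobenius characteristic, to $p_2[\mathrm{ch}(W)]$'' is wrong, and the rest of the plan collapses with it. The plethysm $p_2[s_{(n-k,1^k)}]$ is a symmetric function of degree $2n$, while $\mathrm{ch}\bigl(\mathrm{Sym}^2 W\bigr)-\mathrm{ch}\bigl(\Lambda^2 W\bigr)$ has degree $n$; they cannot coincide. The formula $h_2[f]-e_2[f]=p_2[f]$ describes symmetric and exterior squares of polynomial $\mathrm{GL}$-representations, where tensor product corresponds to the ordinary product in $\Lambda$. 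Here $W$ is an $\mathfrak{S}_n$-module and the tensor square is governed by the \emph{internal} (Kronecker) product; the Frobenius image of $g\mapsto\chi_W(g^2)$ is the linear endomorphism of $\Lambda_n$ sending $p_\rho\mapsto p_{\rho^{[2]}}$, where $\rho^{[2]}$ is the cycle type of the square of a permutation of type $\rho$ (each odd part kept, each even part $2m$ replaced by $m,m$). That map is not multiplicative, so your Jacobi--Trudi/Pieri pipeline does not apply.

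What survives is your ``shortcut'': compute $c_\lambda=\langle\chi^\lambda,\,g\mapsto\chi^{(n-k,1^k)}(g^2)\rangle_{\mathfrak{S}_n}$ directly from character formulas for hooks and double hooks. That is a legitimate route and the expected answers you state ($c_\lambda=(-1)^{d_1/2}m_\lambda$ for $d_1$ even, $c_\lambda=0$ for $d_1$ odd, $c_\lambda=(-1)^{\lfloor m/2\rfloor}m_\lambda$ for hooks) are exactly what the theorem asserts. But carrying this out is a separate computation from the one you sketched, and you would still need to supply it.

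For comparison, the paper does none of this. It never touches characters or symmetric functions. For double hooks with $d_1$ even it works inside the permutation module $U=V\oplus\mathbf{1}$: a basis of $(\Lambda^kU)^{\otimes 2}$ is indexed by $4$-colourings of $[n]$, and by analysing how Young symmetrizers and the tensor-flip act on this basis it proves a skew-symmetry $w_xc_\lambda=(-1)^{d_1/2}w_{t\circ x}c_\lambda$, reducing via an induction lemma to explicit checks on six-box diagrams. The odd-$d_1$ case is then deduced by Frobenius reciprocity, the branching rule, and Remmel's multiplicities; the hook case is a by-product of the same symmetrizer analysis. So even once repaired, your approach would be genuinely different from the paper's --- character-theoretic rather than combinatorial --- which is fine, but the plethystic shortcut you rely on is not available.
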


The theorem is expounded with explicit coefficients in Corollary \ref{cor:detailed}.

Note that if $\lambda$ is a double hook, then the multiplicity of $M^{\lambda}$ in $\mathrm{Sym}^{2}(\Lambda^{k}V)$ or $\Lambda^{2}(\Lambda^{k}V)$ depends only on the modulo $4$ value of $d_{1}$, the \emph{tail} of the Young diagram, independently of $q$, $p$, $d_2$, if the multiplicity of $M^{\lambda}$ in $(\Lambda^{k}V)^{\otimes2}$ is given. 
The phenomenon is comparable to Murnaghan's theorem (see \cite{Murnaghan1}, \cite{Murnaghan2}) 
if we rephrase it as follows: for $\lambda=(n-p-2d_2-d_1,p,2^{d_2},1^{d_1})$ the multiplicity of $M^{\lambda}$ in $\mathrm{Sym}^2(M^{(n-k,1^k)})$ is independent of $n$, as long as $n$ is sufficiently large (more precisely, if $n\geq 2k+1$).

Our result is motivated by the relative absence of explicit results on the symmetric and exterior Kronecker coefficients, i.e. the multiplicities of irreducibles in the symmetric and exterior squares of irreducible $\mathfrak{S}_n$-representations, compared to the well-investigated topic of Kronecker coefficients. The latter is analysed using various tools, such as symmetric functions (see \cite{Rosas2}), colored Yamanouchi tableaux (see \cite{Blasiak}), or invariant theory of $\textrm{GL}(V)$-representations (see \cite{SamSnowden}).
In contrast, the symmetric and exterior squares are considerably less studied. In \cite{BCI} the case of $\mu$ being a rectangle is examined, moreover, in \cite{Wolosz} the second named author characterized whether an irreducible representation is a quotient of its own exterior square, assuming its Young diagram is of height two, of width two, or a hook diagram.

In this article we use a combinatorial approach, by analysing the action of the Young symmetrizers on colored Young tableaux that correspond to a basis of $\Lambda^k(V\oplus\mathbf{1})\otimes \Lambda^l(V\oplus\mathbf{1})$ where $V\oplus\mathbf{1}$ is the $n$-dimensional permutation representation of $\mathfrak{S}_n$.

\subsection{Example}

Let us illustrate the theorem on an example. Let $n=8$ and $k=2$, i.e. $V$ is the (7-dimensional) standard representation of $\mathfrak{S}_8$. Consider the tensor square of $\Lambda^2 V$, or equivalently, of $M^{(6,1,1)}$ (which is 21-dimensional). Then one may calculate that the multiplicities of irreducible factors, that we list here indexed by their Young diagrams:


\begin{table}[h!]
\centering
\hspace*{-0.8cm}
\begin{tabular}{l||c|c|c}
         &$(\Lambda^k V)^{\otimes 2}$& 
         $\mathrm{Sym}^2(\Lambda^k V)$&
         $\mathrm{\Lambda}^2(\Lambda^k V)$\\
         \hline\hline
         $[6,2]$    & 2 & 2 & 0\\
         $[5,3]$    & 1 & 1 & 0\\
         $[5,2,1]$  & 2 & 1 & 1\\
         $[4,2,2]$  & 1 & 1 & 0\\
         $[4,2,1^2]$& 1 & 0 & 1\\
         \hline
         $[8]$      & 1 & 1 & 0\\ 
         $[7,1]$    & 1 & 1 & 0\\
         $[6,1^2]$  & 1 & 0 & 1\\
         $[5,1^3]$  & 1 & 0 & 1\\ 
         $[4,1^4]$  & 1 & 1 & 0
\end{tabular}
\caption {Multiplicities for $n=8$ and $k=2$}
\end{table}

The decomposition of $(\Lambda^kV)^{\otimes 2}$ into symmetric and antisymmetric components is not isotypic, e.g. $M^{(5,2,1)}$ appears in both components. Note however, that it may also happen (see e.g. $M^{(6,2)}$), that an irreducible factor has multiplicity two in $(\Lambda^2V)^{\otimes 2}$, but it is not a factor of the antisymmetric component.
\pagebreak

\subsection{Idea of the proof}

Let us fix $\lambda = (q,p,2^{d_2},1^{d_1})$. First let's consider the case when $d_1$ is even. Denote by $\mathbf{1}$ the trivial representation $M^{(n)}$, and let
\[
f:\big(\Lambda^{k}(V\oplus\mathbf{1})\big)^{\otimes 2} 
\twoheadrightarrow (\Lambda^{k}V)^{\otimes 2}
\]
be the $\mathfrak{S_n}$-homomorphism induced by the natural projection $V\oplus \mathbf{1}\to V$. The action of $\mathfrak{S}_n$ on $\Lambda^k(V\oplus \mathbf{1})$ and on its tensor square is more combinatorial than $\Lambda^k(V)$, so we analyse the action of the Young symmetrizers on the covering representation $\mathrm{Dom}(f)$, and push down the results via $f$.

More precisely, we will show that there is a subspace $A\subseteq \mathrm{Dom}(f)$ such that $f|_A$ is surjective and that the following skew-symmetry holds:
\begin{equation}\label{eq:skew-symmetry}
(a_1\otimes a_2) c_{\lambda} = (-1)^{m_{\lambda}} (a_2\otimes a_1) c_{\lambda}\qquad (\forall a_1\otimes a_2\in A, \forall \lambda \vdash n),
\end{equation}
where $m_{\lambda}=\frac{d_1}{2}$, and $c_\lambda$ is the Young symmetrizer corresponding to the canonical Young tableau of $\lambda$ (i.e. which is filled with $1, 2, \dots, n$ increasingly, from top to bottom, and from left to right in each row).

The existence of such a subspace $A$ implies that if $m_{\lambda}$ is odd, then for any $b\in\mathrm{Sym}^2(\Lambda^kV)$ by Eq. \ref{eq:skew-symmetry} we have $b c_{\lambda} = (-1)^{m_\lambda}b c_{\lambda}=(-1)b c_{\lambda}=0$, so $c_\lambda$ vanishes on $\mathrm{Sym}^2(\Lambda^kV)$. Similarly, if $m_{\lambda}$ is even, then for any $b\in\mathrm{\Lambda}^2(\Lambda^kV)$ we have $b c_{\lambda} = (-1)^{m_{\lambda}+1}b c_{\lambda}=(-1)b c_{\lambda}=0$. So the theorem follows for the case when $d_1$ is even.

In the case when $d_1$ is odd, we use the branching rule of $\mathfrak{S}_n$-representations, and an induction-restriction argument to derive the result from the even case and from Remmel's theorem, recalled in Theorem \ref{thm:Remmel}. If $\lambda$ is not a double-hook, but a hook, then the statement is a by-product of the lemmas proved for the even case, see Corollary \ref{cor:simple_hooks_even}.

\subsection{Explicit multiplicities}

For the sake of completeness, let us combine the theorem with the decomposition theorem for $(\Lambda^{k}V)^{\otimes2}$:
\begin{cor}
\label{cor:detailed}
Let $\lambda\vdash n$
be a Young diagram. Then the multiplicities of $M^{\lambda}$ in $\mathrm{Sym}^{2}(\Lambda^{k}V)$
(resp. $\Lambda^{2}(\Lambda^{k}V)$) are the following:
\begin{itemize}
\item \setlength\itemsep{4pt}if $\lambda=(q,p,2^{d_{2}},1^{d_{1}})$ is
a double hook for some $q\geq p\geq2$, then
\begin{itemize}
\item \setlength\itemsep{4pt}2, if $|2k+1-n|\leq q-p$ and
$d_{1}\overset{\vphantom{k}}{\equiv}0\textrm{ (resp. }2\textrm{) mod }4$,
\item 1, if $|2k+1-n|\leq q-p$ and $2\nmid d_{1}$,
\item 1, if $|2k+1-n|= q-p+1$ and $d_{1}\equiv0\textrm{ (resp. }2\textrm{) mod }4$,
\end{itemize}
\item 1, if $\lambda=(n-m,1^{m})$ is a hook where $0\leq m\leq2\min(k,n-k-1)$
and $m\equiv0$ or $1\textrm{ mod }4$ (resp. $m\equiv2$ or $3\text{ mod }4$),
\item $0$ otherwise.
\end{itemize}
\end{cor}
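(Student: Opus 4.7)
The plan is to deduce the corollary by combining Theorem \ref{thm:main} with Remmel's decomposition of $(\Lambda^{k}V)^{\otimes 2}$ (Theorem \ref{thm:Remmel}). Since $\mathrm{Sym}^{2}(\Lambda^{k}V)$ and $\Lambda^{2}(\Lambda^{k}V)$ form an $\mathfrak{S}_{n}$-invariant direct-sum decomposition of $(\Lambda^{k}V)^{\otimes 2}$, the two multiplicities to be computed have to add up to the total multiplicity of $M^{\lambda}$ in the tensor square, which is read off from Remmel.

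Concretely, from Theorem \ref{thm:Remmel} one has $\langle M^{\lambda},(\Lambda^{k}V)^{\otimes 2}\rangle=2$ when $\lambda=(q,p,2^{d_{2}},1^{d_{1}})$ is a double hook with $|2k+1-n|\leq q-p$; equals $1$ when $\lambda$ is a double hook with $|2k+1-n|=q-p+1$; equals $1$ when $\lambda=(n-m,1^{m})$ with $0\leq m\leq 2\min(k,n-k-1)$; and vanishes otherwise. Applying Theorem \ref{thm:main} then splits each total: for a multiplicity-$2$ double hook with $d_{1}\equiv 0\pmod{4}$ (resp. $\equiv 2\pmod{4}$) both copies land in $\mathrm{Sym}^{2}$ (resp. $\Lambda^{2}$), while for odd $d_{1}$ exactly one copy lies in each summand; for a multiplicity-$1$ double hook the single copy goes to whichever summand is dictated by $d_{1}\bmod 4$; and for a hook the single copy is placed in $\mathrm{Sym}^{2}$ if $m\equiv 0,1\pmod{4}$ and in $\Lambda^{2}$ if $m\equiv 2,3\pmod{4}$. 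These assignments reproduce precisely the cases listed in the statement.

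The only step that is not pure bookkeeping---and the one I anticipate to be the main obstacle to a clean write-up---is explaining the conspicuous absence of a case in the corollary for multiplicity-$1$ double hooks with odd $d_{1}$. I would settle this by a parity check: from $n=q+p+2d_{2}+d_{1}$ one has $|2k+1-n|\equiv q+p+d_{1}+1\pmod{2}$ while $q-p+1\equiv q+p+1\pmod{2}$, so the equality $|2k+1-n|=q-p+1$ already forces $d_{1}$ to be even. Consequently this missing subcase cannot arise, and the case analysis above exhausts all possibilities, completing the proof.
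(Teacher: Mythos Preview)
Your proposal is correct and follows exactly the paper's approach: the paper's proof is the single sentence ``Corollary \ref{cor:detailed} is directly implied by Theorem \ref{thm:main} and Remmel's Theorem \ref{thm:Remmel},'' and you have simply spelled out the bookkeeping. Your parity observation that $|2k+1-n|=q-p+1$ forces $d_{1}$ to be even (via $n=q+p+2d_{2}+d_{1}$) is a nice addition that the paper leaves implicit.
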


\subsection{Outline of the article}

In Section \ref{sec:Preliminaries} we introduce a notation on bases in the relevant tensor product representations, moreover we derive some observations on how the tensor-component flipping $T$, and the dualization $P$ act on the subrepresentations of $\big(\Lambda^k (V\oplus 1)\big)^{\otimes 2}$. We also introduce the notion of proper swaps in Lemma \ref{lem:proper_swap} to simplify calculations in the subsequent sections.
In Section \ref{sec:even_tail} (resp.
\ref{sec:odd_tail}) we prove the case of Theorem \ref{thm:main}
when $d_{1}$ is even (resp. odd), see Prop. \ref{prop:sym_even} (resp. Prop. \ref{prop:sym_odd}). 
The case of hook representations is a
by-product of the argument in Sec. \ref{sec:even_tail} (see Cor. \ref{cor:simple_hooks_even}).

\pagebreak

\section{Preliminaries\label{sec:Preliminaries}}

Denote $\mathbb{N}^{+}=\{1,2,\dots \}$, let $n\in\mathbb{N}^+$ be fixed and consider the $n$-dimensional
permutation representation of the symmetric group $\mathfrak{S}_{n}$:
\[
U\overset{\mathrm{def}}{=}V\oplus\mathbf{1}
\]
where $V=M^{(n-1, 1)}$ is the standard and $\mathbf{1}=M^{(n)}$ is the trivial representation. The standard
$\mathbb{F}$-basis elements of $\Lambda^{k}U\otimes\Lambda^{l}U$
for any $k,l\in\mathbb{N}^{+}$ are denoted as 
\begin{equation}\label{eq:def_basis}
u_{I}\otimes u_{J}=(u_{i_{1}}\wedge\dots\wedge u_{i_{k}})\otimes(u_{j_{1}}\wedge\dots\wedge u_{j_{l}})
\end{equation}
where $I=\{i_{1},\dots,i_{k}\}$ and $J=\{j_{1},\dots,j_{l}\}$ for
some $1\leq i_{1}<\dots<i_{k}\leq n$ and $1\leq j_{1}<\dots<j_{l}\leq n$.
The action of $s\in\mathfrak{S}_{n}$ is defined as 
\[
(u_{I}\otimes u_{J})s=(u_{i_{1}s}\wedge\dots\wedge u_{i_{k}s})\otimes(u_{j_{1}s}\wedge\dots\wedge u_{j_{l}s}).
\]
Consequently, the basis of $\Lambda^{k}U\otimes\Lambda^{l}U$ defined above may be indexed with $4$-colorings as follows. Define the set of all $4$-colorings as
\[
X\overset{\mathrm{def}}{=}\big\{ x:[n]\to\{0,1,2,3\}\big\}
\]
where $[n]=\{1,2,\dots,n\}$ and $n$ is assumed to be fixed, hence omitted from the notation.
Then let
\[
X_{k,l}\overset{\mathrm{def}}{=}\big\{ 
x\in X
\,\mid\,|x^{-1}(\{1,3\})|=k,\ |x^{-1}(\{2,3\})|=l\big\}.
\]
We claim that there is a bijection between the given basis of $\Lambda^k U\otimes\Lambda^l U$ and $X_{k,l}$, based on the
four subsets $I\cap J$, $I\backslash J$, $J\backslash I$ and $[n]\backslash(I\cup J)$. Indeed, for any $x\in X_{k,l}$ define
\[
w_{x}\overset{\mathrm{def}}{=}u_{x^{-1}(\{1,3\})}\otimes u_{x^{-1}(\{2,3\})}\in\Lambda^{k}U\otimes\Lambda^{l}U.
\]
Clearly, $\{w_{x}\mid x\in X_{k,l}\}$ is the standard basis of $\Lambda^{k}U\otimes\Lambda^{l}U$
as defined in Eq. \ref{eq:def_basis}.

Define the right action of $\mathfrak{S}_{n}$ on $X$ as 
\[
xs=\big(m\mapsto x(ms^{-1})\big)\qquad(m\in[n])
\]
for all $x\in X$. Note that even though there is a bijection
on the $\mathfrak{S}_{n}$-sets $\{w_{x}\ |\ x\in X_{k,l}\}$ and $X_{k,l}\subseteq X$,
$w_{x}s$ does not necessarily equal the basis element $w_{xs}$.
Instead, 
\begin{equation}
w_{x}s=\varepsilon_{x,s}w_{xs}\label{eq:signs_def}
\end{equation}
for an appropriate choice of $\varepsilon_{x,s}\in\{1,-1\}$ for any
$x\in X_{k,l}$ and $s\in\mathfrak{S}_{n}$. 

More explicitly, we may express these signs using inversion numbers
as 
\begin{equation}
\varepsilon_{x,s}=(-1)^{N_{1}(x,s)+N_{2}(x,s)}\label{eq:signs_def_inversion}
\end{equation}
where
\begin{equation}
N_{c}(x,s)\overset{\textrm{def}}{=}
\big|\big\{(p,q)\in[n]^{2}\ |\ p<q,\ ps>qs,\ x(ps),x(qs)\in\{c,3\}\big\}\big|\label{eq:inversion}
\end{equation}
for $c\in\{1,2\}$. Indeed, if $w_{x}=u_{I}\otimes u_{J}$ as above,
then 
\[
u_{I}s=u_{i_{1}s}\wedge\dots\wedge u_{i_{k}s}=(-1)^{N_{1}(x,s)}u_{Is}
\]
where $Is=\{is\ |\ i\in I\}$, since $N_{1}(x,s)$ is the inversion
number of the permutation required to sort the sequence $(i_{1}s,\dots, i_{k}s)$
increasingly. Similarly, $u_{J}s=(-1)^{N_{2}(x,s)}u_{Js}$, hence
Eq. \ref{eq:signs_def_inversion} holds.

\subsection{Color-switch (12)\label{subsec:Color-switch (12)}}

Denote by $t=(1,2)$ the transposition of $1$ and $2$ on the set
$\{0,1,2,3\}$. Then $x\mapsto t\circ x$ gives a bijection $X\to X$
that commutes with the $\mathfrak{S}_{n}$-action i.e. $(t\circ x)s=t\circ(xs)$
for any $s\in\mathfrak{S}_{n}$. 

We will need the following elementary properties of $\varepsilon_{x,s}$ defined
in Eq. \ref{eq:signs_def}:
\begin{lem}\label{lem:epsilon_basic_props}
Let $x\in X$
and $s\in\mathfrak{S}_{n}$. Then
\begin{enumerate}
\item $\varepsilon_{t\circ x,s}=\varepsilon_{x,s}$,
\item If $s$ is a transposition $(i,j)$ such that $x(i)=x(j)\in\{1,2\}$
then $\varepsilon_{x,s}=-1$,
\item If $s$ is a transposition $(i,j)$ such that $x(i)=x(j)\in\{0,3\}$
then $\varepsilon_{x,s}=1$.
\end{enumerate}
\end{lem}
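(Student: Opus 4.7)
All three parts should fall out of the explicit inversion formula $\varepsilon_{x,s}=(-1)^{N_1(x,s)+N_2(x,s)}$ from (\ref{eq:signs_def_inversion}); no representation-theoretic input beyond the definitions is needed, and the only real work is careful bookkeeping.

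For part (1), the plan is simply to unpack the definition. Precomposing $x$ with $t=(1,2)$ swaps the preimages $x^{-1}(1)$ and $x^{-1}(2)$ and fixes $x^{-1}(0),x^{-1}(3)$. Plugging into (\ref{eq:inversion}), the condition ``$(t\circ x)(ps),(t\circ x)(qs)\in\{1,3\}$'' is therefore equivalent to ``$x(ps),x(qs)\in\{2,3\}$'', so $N_1(t\circ x,s)=N_2(x,s)$ and $N_2(t\circ x,s)=N_1(x,s)$. The parity of $N_1+N_2$ is thus invariant, giving $\varepsilon_{t\circ x,s}=\varepsilon_{x,s}$.

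For parts (2) and (3), the key step is to enumerate explicitly the pairs $p<q$ inverted by a transposition $s=(i,j)$, $i<j$: namely $(i,j)$ itself together with $(i,r)$ and $(r,j)$ for each intermediate index $i<r<j$. On these three types of pairs the unordered pair $\{x(ps),x(qs)\}$ equals $\{x(i),x(j)\}$, $\{x(j),x(r)\}$, $\{x(r),x(i)\}$ respectively. Under the hypothesis $x(i)=x(j)=c'$, I would split by the value of $c'$ and, for each $c\in\{1,2\}$, count the pairs satisfying $x(ps),x(qs)\in\{c,3\}$.

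For (2), by part (1) it suffices to treat $c'=1$. Then $N_2(x,s)=0$ since $1\notin\{2,3\}$, while
\[
N_1(x,s)=1+2\,\bigl|\{\,r : i<r<j,\ x(r)\in\{1,3\}\,\}\bigr|
\]
is odd, hence $\varepsilon_{x,s}=-1$. For (3), if $c'=0$ then no inverted pair contributes to either count, so $N_1=N_2=0$; if $c'=3$, then the pair $(i,j)$ contributes $1$ to each of $N_1$ and $N_2$, while the intermediate pairs contribute an even number to each, so $N_1+N_2$ is again even. Either way $\varepsilon_{x,s}=1$. I do not anticipate a real obstacle here; the only pitfall is losing a pair in the casework of which indices get inverted by $s=(i,j)$, and once that list is pinned down the parity arguments are immediate.
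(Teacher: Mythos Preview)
Your proposal is correct and follows essentially the same route as the paper's own proof: both unpack the inversion formula, observe $N_1(t\circ x,s)=N_2(x,s)$ and vice versa for part~(1), reduce part~(2) to the case $c'=1$ and compute $N_1(x,s)=1+2|\{r:i<r<j,\ x(r)\in\{1,3\}\}|$ with $N_2=0$, and treat part~(3) analogously. The paper merely says part~(3) ``follows similarly'' where you give the explicit casework for $c'=0$ and $c'=3$; otherwise the arguments coincide.
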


\begin{proof}
Let $N_{1}(x,s)$ and $N_{2}(x,s)$ as in Eq. \ref{eq:inversion}.
Then $N_{2}(t\circ x,s)=N_{1}(x,s)$ and $N_{1}(t\circ x,s)=N_{2}(x,s)$
by definition, hence $\varepsilon_{t\circ x}=\varepsilon_{x,s}$ holds.

For the second statement, it is enough to prove the case of $x(i)=x(j)=1$ and $i < j$
by symmetry. Then we may note that $N_{2}(x,s)=0$ and 
\[
N_{1}(x,s)=1+2|\{m\in[n]\ |\ i<m<j,\ x(m)\in\{1,3\}\}|
\]
hence $\varepsilon_{x,s}=-1$. The proof of the last statement follows
similarly.
\end{proof}

\begin{cor}\label{cor:T iso}
The linear extension of $t$,
\[
T:\Lambda^{k}U\otimes\Lambda^{l}U\to\Lambda^{l}U\otimes\Lambda^{k}U \qquad
w_{x}\mapsto w_{t\circ x}
\]
is an $\mathbb{F}\mathfrak{S}_{n}$-module isomorphism.
\end{cor}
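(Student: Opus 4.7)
The plan is to check two things: first, that $T$ is a well-defined linear bijection between the stated spaces, and second, that it intertwines the $\mathfrak{S}_n$-actions. Linearity and bijectivity are immediate from the setup: the map $x \mapsto t \circ x$ is an involution on $X$, and since $t$ swaps the colors $1$ and $2$ while fixing $0$ and $3$, it sends $X_{k,l}$ bijectively onto $X_{l,k}$ (the sizes of the preimages of $\{1,3\}$ and $\{2,3\}$ get exchanged). Hence $T$ permutes the bases $\{w_x \mid x \in X_{k,l}\}$ and $\{w_y \mid y \in X_{l,k}\}$, so it is an $\mathbb{F}$-linear isomorphism.

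For $\mathfrak{S}_n$-equivariance, I would verify $T(w_x \cdot s) = T(w_x)\cdot s$ on basis elements. By Eq.~\ref{eq:signs_def},
\[
T(w_x \cdot s) = T(\varepsilon_{x,s}\, w_{xs}) = \varepsilon_{x,s}\, w_{t \circ (xs)},
\]
while
\[
T(w_x) \cdot s = w_{t\circ x}\cdot s = \varepsilon_{t \circ x,\, s}\, w_{(t\circ x)s}.
\]
Two ingredients make these equal: the already-noted identity $(t\circ x)s = t\circ(xs)$ from Subsection~\ref{subsec:Color-switch (12)}, which matches the indices of the basis vectors, and the equality of signs $\varepsilon_{t\circ x,s} = \varepsilon_{x,s}$ provided by Lemma~\ref{lem:epsilon_basic_props}(1). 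Combining these two identities gives the desired equivariance on each basis element, and the statement extends to all of $\Lambda^k U \otimes \Lambda^l U$ by linearity.

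There is essentially no obstacle here, since the lemma just proved was engineered precisely for this application: the non-trivial content (that swapping colors $1$ and $2$ in $x$ does not affect the sign $\varepsilon_{x,s}$ produced by sorting) has already been absorbed into Lemma~\ref{lem:epsilon_basic_props}(1). The only thing to be careful about is bookkeeping the fact that the codomain is $\Lambda^l U \otimes \Lambda^k U$ rather than $\Lambda^k U \otimes \Lambda^l U$, which is automatic from the color swap changing the cardinalities of $x^{-1}(\{1,3\})$ and $x^{-1}(\{2,3\})$.
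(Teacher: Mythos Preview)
Your proof is correct and follows essentially the same approach as the paper: both verify equivariance on basis elements via the chain $T(w_x)s = w_{t\circ x}s = \varepsilon_{t\circ x,s} w_{(t\circ x)s} = \varepsilon_{x,s} w_{t\circ(xs)} = T(w_x s)$, using Lemma~\ref{lem:epsilon_basic_props}(1) and the commutativity $(t\circ x)s = t\circ(xs)$. Your version is more explicit about the bijectivity of $T$ and the bookkeeping of the codomain, which the paper leaves implicit.
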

\begin{proof}
Indeed, as $t$ commutes with the group action, Lemma \ref{lem:epsilon_basic_props}/1 implies $T(w_x)s = w_{t\circ x}s = \varepsilon_{t\circ x,s} w_{(t\circ x)s} = \varepsilon_{x,s} w_{t\circ xs} = \varepsilon_{x,s} T(w_{xs}) = T(w_x s)$ so the claim follows.
\end{proof}

\subsection{Color-switch (03)(12)\label{subsec:Color-switch (03)(12)}}
In the previous subsection we introduced the isomorphism $T$, that can be interpreted combinatorially as switching the colors 1 and 2 for the elements of $X_{k,l}$, which parametrize the standard basis of $\Lambda^k U \otimes \Lambda^l U$. Now we define a similar isomorphism, switching color 1 with 2 and color 0 with 3, at the cost of an extra sign.

Define $p:\{0,1,2,3\}\to \{0,1,2,3\}$ as $p(c)=3-c$. Clearly, $p\circ (xs) = (p \circ x)s$ for any $x\in X_{k, l}$. By definition we may write $w_x = u_I \otimes u_J$ for some $I,J\subseteq [n]$. Then
\[w_{p\circ x} = u_{I^c} \otimes u_{J^c}\]
where $I^c = [n]\backslash I$.

Consider the linear bijection
\[
P:\Lambda^{k}U\otimes\Lambda^{l}U\to\Lambda^{n-k}U\otimes\Lambda^{n-l}U
\qquad
w_x \mapsto h(x)\, w_{p\circ x}
\]
where we set
$
h(x)=(-1)^{\sum_{\alpha=1}^k (i_{\alpha} - \alpha) + \sum_{\alpha=1}^l (j_{\alpha}-\alpha)}
$
for any $x\in X_{k, l}$.

\begin{lem}
\label{lem:P iso}
$P$ is an $\mathbb{F}\mathfrak{S}_n$-isomorphism.
\end{lem}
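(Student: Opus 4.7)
The map $P$ is visibly a linear bijection ($p^2=\mathrm{id}$ on $\{0,1,2,3\}$ and $h(x)=\pm 1$), so only the $\mathfrak{S}_n$-equivariance requires argument. The plan is to factor $P$ as a tensor product and pass the equivariance through the sign character.

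First, I would write $P = P_k \otimes P_l$, where $P_m : \Lambda^m U \to \Lambda^{n-m} U$ is defined on the standard basis by $P_m(u_I) = (-1)^{\sum_{\alpha=1}^{m}(i_\alpha - \alpha)} u_{I^c}$ for $I=\{i_1<\dots<i_m\}$. The goal is then to prove the \emph{twisted equivariance}
\[ P_m(u_I \cdot s) = \mathrm{sgn}(s) \cdot P_m(u_I) \cdot s \qquad (\forall\, s\in\mathfrak{S}_n,\ I\subseteq[n],\ |I|=m), \]
from which the proposition follows immediately: for $w_x = u_I \otimes u_J$,
\[ P(w_x \cdot s) = P_k(u_I\cdot s)\otimes P_l(u_J\cdot s) = \mathrm{sgn}(s)^2 \bigl(P_k(u_I) \otimes P_l(u_J)\bigr)\cdot s = P(w_x)\cdot s, \]
since the two sign twists cancel.

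To prove the twisted equivariance of $P_m$, I would rely on three standard facts. First, the wedge product $\wedge : \Lambda^m U \otimes \Lambda^{n-m}U \to \Lambda^n U$ is $\mathfrak{S}_n$-equivariant and perfect. Second, a direct inversion count yields $u_I \wedge u_{I^c} = (-1)^{\sum_\alpha(i_\alpha - \alpha)}\, u_{[n]}$; that is, the very sign defining $P_m$ is the signature of the shuffle interleaving $I$ and $I^c$. Third, $\mathfrak{S}_n$ acts on $\Lambda^n U$ (spanned by $u_{[n]}$) by the sign character, since $U$ is the permutation representation. Combining these, $P_m$ is identified with the natural equivariant composition $\Lambda^m U \cong (\Lambda^{n-m}U)^* \otimes \Lambda^n U \cong \Lambda^{n-m}U \otimes \mathrm{sgn}$, where the last isomorphism uses the self-duality $(\Lambda^{n-m}U)^* \cong \Lambda^{n-m}U$ coming from the $\mathfrak{S}_n$-invariant orthonormality of $\{u_J\}$; unpacking this identification yields exactly the sign-twisted intertwining relation above.

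The main obstacle is sign-tracking. A direct verification of $P(w_x \cdot s) = P(w_x)\cdot s$ on transpositions via Eq.~\ref{eq:signs_def_inversion} would require a moderately painful case split on the values of $x(i)$ and $x(i+1)$, amounting to showing $\varepsilon_{x,s}\varepsilon_{p\circ x,s} = h(x)h(xs)$ by hand. The conceptual route outlined above sidesteps these cases by packaging all the signs into the wedge pairing and the sign character, so the bookkeeping collapses into the single identity $u_I \wedge u_{I^c} = (-1)^{\sum_\alpha (i_\alpha-\alpha)}\,u_{[n]}$.
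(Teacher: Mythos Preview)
Your proposal is correct and follows essentially the same approach as the paper: both factor $P$ as a tensor product of two maps $\Lambda^m U \to \Lambda^{n-m}U$ that are sign-twisted equivariant (the paper packages the twist as an explicit $\Lambda^n U$ factor, you phrase it as $P_m(u_I\cdot s)=\mathrm{sgn}(s)\,P_m(u_I)\cdot s$), and both identify the defining sign $(-1)^{\sum_\alpha(i_\alpha-\alpha)}$ via the shuffle identity $u_I\wedge u_{I^c}=(-1)^{\sum_\alpha(i_\alpha-\alpha)}u_{[n]}$, so that the two sign twists cancel in the tensor product. The only cosmetic difference is that the paper verifies equivariance of $\hat P_k:u_I\mapsto u_{I^c}\otimes u_I\wedge u_{I^c}$ by a direct two-line computation, whereas you invoke the duality $\Lambda^m U\cong(\Lambda^{n-m}U)^*\otimes\Lambda^n U$ together with the $\mathfrak{S}_n$-invariant inner product on $U$.
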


\begin{proof}
First we show that 
\[
\hat{P_k}:\Lambda^k U \to \Lambda^{n-k}U \otimes \Lambda^n U \qquad
u_I \mapsto u_{I^c} \otimes u_I \wedge u_{I^c}
\]
is an $\mathbb{F}\mathfrak{S}_n$-isomorphism.

Indeed, define $\mathrm{sign}_I(s)\in \{1,-1\}$ by the equation $u_I s = \mathrm{sign}_I(s) u_{Is}$. Then
\[
\hat{P_k}(u_I s) = \hat{P_k}(\mathrm{sign}_I(s) u_{Is})
= \mathrm{sign}_I(s) \,u_{I^cs} \otimes u_{Is} \wedge u_{I^cs}
\qquad \quad
\]
On the other hand,
\[
\begin{aligned}
\hat{P_k}(u_I) s &= \big( u_{I^c} \otimes u_I \wedge u_{I^c} \big) s \\
&= \big(\mathrm{sign}_{I^c}(s) u_{I^cs}\big) \otimes \big(\mathrm{sign}_I(s) u_{Is}\big) \wedge 
\big(\mathrm{sign}_{I^c}(s) u_{I^cs}\big) \\
&= \mathrm{sign}_I(s) \, u_{I^cs} \otimes  u_{Is} \wedge u_{I^cs}
\end{aligned}
\]
Therefore, $\hat{P_k}$ is indeed $\mathfrak{S}_n$-equivariant. 

Note that we may express $\hat{P_k}(u_I)$ equivalently as 
\[
\hat{P}_k(u_I) = (-1)^{\sum_{\alpha=1}^k (i_{\alpha}-\alpha)} u_{I^c} \otimes (u_1 \wedge u_2 \wedge \dots \wedge u_n)
\]
because we may sort the components of $u_I \wedge u_{I^c}$ using $\sum_{\alpha=1}^k (i_{\alpha}-\alpha)$ transpositions. 

Now consider the tensor product of $\hat{P_k}$ and $\hat{P_l}$:
\begin{align*}
\hat{P_k}\otimes \hat{P_l}\,:\,\Lambda^kU\otimes\Lambda^lU &\to \Lambda^{n-k}U\otimes \Lambda^nU\otimes\Lambda^{n-l}U\otimes \Lambda^nU
\\
u_I \otimes u_J &\mapsto
(-1)^{\sum_{\alpha=1}^k (i_{\alpha} - \alpha) + \sum_{\alpha=1}^l (j_{\alpha}-\alpha)} u_{I^c} \otimes \kappa \otimes u_{J^c} \otimes \kappa
\end{align*}
where $\kappa = u_1 \wedge u_2 \wedge \dots \wedge u_n$. However, $\Lambda^n U$ is the sign representation of $\mathfrak{S_n}$ hence its square is the identity. Therefore, $\hat{P_k}\otimes \hat{P_l}$ is the same as $P$ if we identify $\mathbb{F}$ with $(\Lambda^n U)^{\otimes2}$ using $1\mapsto \kappa\otimes \kappa$, in particular, $P$ is an isomorphism.
\end{proof}

\subsection{Proper swaps\label{subsec:Proper-Swaps}}
We will need another statement about $\varepsilon_{x,s}$ in Sec.
\ref{sec:even_tail}. First let us illustrate it on an example. Let $n=9$ and
\[
(1,2,1,2,2,1,3,1,2) \in X_{5,5} \qquad s = (1,2)(4,6)(5,8) \in \mathfrak{S}_{9}
\]
i.e. $s$ is switching cells of color 1 with cells of color 2 in an order-preserving way, such that no cell of color $1$ or $2$ is missed between them. 
Note that outside of switches there may be cells of color $1$ or $2$. In other words, we may split $x$ into blocks like $(1,2\mid 1\mid 2,2,1,3,1\mid 2)$ where there are blocks where we switch all the $1$'s and $2$'s, and there are blocks not moved by the permutation.

One can check that $\varepsilon_{x,s}=1$, since each transposition contributes to the inversion numbers $N_1(x,s)$ and $N_2(x,s)$ with the same amount. In the next lemma we generalize this example.

For $i,j$ integers denote
\[
[[i,j]] \overset{\mathrm{def}}{=}
\begin{cases}
[i,j]\cap\mathbb{Z} & \textrm{if } i<j\\
[j,i]\cap\mathbb{Z} & \textrm{otherwise.}
\end{cases}
\]
We show the following:

\begin{lem}[Proper Swap Lemma]\label{lem:proper_swap}
Let $x\in X$ and $s\in\mathfrak{S}_{n}$ such that 
\begin{itemize}
\item $s=\prod_{\ell=1}^{m}(i_{\ell},j_{\ell})$ is a product of $m$ disjoint
transpositions for some $1\leq i_{1}<\dots<i_{m}\leq n$ and $1\leq j_{1}<\dots<j_{m}\leq n$,
\item for all $\ell\in[m]$, $x(i_{\ell})=1$ and $x(j_{\ell})=2$, and
\item for all $\ell\in[m]$,
\[
|\{\nu \in [[i_{\ell},j_{\ell}]] \,\mid\,
\nu s=\nu,\, x(\nu)=1\}|\equiv
|\{\nu \in [[i_{\ell},j_{\ell}]] \,\mid\, 
\nu s=\nu,\, x(\nu)=2\}|
\quad(\textrm{mod }2)
\]
\end{itemize}
Then $\varepsilon_{x,s}=1$.
\end{lem}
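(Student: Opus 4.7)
The strategy is to apply Eq.~\ref{eq:signs_def_inversion}, which gives $\varepsilon_{x,s}=(-1)^{N_1(x,s)+N_2(x,s)}$, and show that $N_1(x,s)+N_2(x,s)$ is even. Writing $I=x^{-1}(\{1,3\})$ and $J=x^{-1}(\{2,3\})$, I would first reparametrize the definition of $N_c$ by substituting $a=ps$, $b=qs$ and using that a product of disjoint transpositions is an involution, yielding
\[
N_1(x,s)=\big|\{(a,b)\in I^2 : a<b,\ as>bs\}\big|,\qquad
N_2(x,s)=\big|\{(a,b)\in J^2 : a<b,\ as>bs\}\big|.
\]
Thus each $N_c$ is an ordinary inversion count of $s$ restricted to a color set, which is far more tractable than the original definition.

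Next I would expand $N_1(x,s)$ by a case analysis on how $s$ acts on the pair $(a,b)$. The moved elements of $I$ are precisely $\{i_1,\dots,i_m\}$, since $x(i_\ell)=1\in\{1,3\}$ while $x(j_\ell)=2\notin\{1,3\}$. Pairs with both $a,b$ fixed contribute nothing. Pairs with both $a,b$ moved must have the form $a=i_\ell,\,b=i_{\ell'}$ with $\ell<\ell'$ by the monotonicity $i_1<\dots<i_m$; then $as=j_\ell$ and $bs=j_{\ell'}$, and the second monotonicity $j_1<\dots<j_m$ gives $as<bs$, so these also contribute nothing. The remaining mixed case splits into the two sub-cases $i_\ell<j_\ell$ and $j_\ell<i_\ell$, which combine uniformly into
\[
N_1(x,s)=\sum_{\ell=1}^m \big|\{\nu\in I : \nu s=\nu,\ \nu\text{ lies strictly between }i_\ell\text{ and }j_\ell\}\big|.
\]
By the symmetric argument (swapping the roles of colors $1$ and $2$), $N_2(x,s)$ equals the same expression with $J$ in place of $I$.

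For an $s$-fixed $\nu$, membership in $I$ is equivalent to $x(\nu)\in\{1,3\}$ and membership in $J$ to $x(\nu)\in\{2,3\}$, so color $3$ contributes twice to $N_1+N_2$ and drops out modulo $2$. Noting that $i_\ell$ and $j_\ell$ are themselves moved by $s$ (so restricting to the closed interval $[[i_\ell,j_\ell]]$ does not change the count of fixed points), this gives
\[
N_1(x,s)+N_2(x,s)\equiv\sum_{\ell=1}^m\Big(\big|\{\nu\in[[i_\ell,j_\ell]]:\nu s=\nu,\,x(\nu)=1\}\big|+\big|\{\nu\in[[i_\ell,j_\ell]]:\nu s=\nu,\,x(\nu)=2\}\big|\Big)\pmod 2.
\]
The third hypothesis of the lemma states exactly that each summand is even, and the conclusion $\varepsilon_{x,s}=1$ follows.

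The step most prone to error is the case analysis for $N_1$: one must verify that the "both moved" case really vanishes, which genuinely requires both monotonicity conditions $i_1<\dots<i_m$ and $j_1<\dots<j_m$ simultaneously, and one must patch together the two orderings of $i_\ell,j_\ell$ in the mixed case before packaging the result into the symmetric "strictly between" formula. Once that combinatorial reduction is in place, the parity manipulation and the appeal to the hypothesis are immediate.
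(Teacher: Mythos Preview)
Your proof is correct and follows essentially the same approach as the paper: a case analysis on pairs according to whether each element is fixed or moved by $s$, showing that only the mixed (one fixed, one moved) pairs contribute, and that these contributions reduce to counting $s$-fixed elements of each color in the interval $[[i_\ell,j_\ell]]$, after which the color-$3$ terms cancel mod $2$ and the third hypothesis finishes the argument. Your preliminary substitution $a=ps$, $b=qs$ (using that $s$ is an involution) is a minor but pleasant simplification: it turns $N_c$ into a plain inversion count of $s$ on the color set $I$ or $J$, which makes the case analysis slightly more transparent than the paper's version (and in fact sidesteps a small labeling slip in the paper's treatment of the both-moved case).
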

We call $s$ a \emph{proper swap with respect to $x$} if the assumptions
of Lemma \ref{lem:proper_swap} hold.
\begin{proof}
Let $p,q\in[n]^{2}$ such that $x(ps),x(qs)\in\{1,3\}$.
If they are both fixed points of $s$ then they clearly don't contribute to $N_1(x,s)$ by $ps=p<q=qs$. 
Similarly, if they are non-fixed points, then $p=i_{\ell_{1}}$ and $q=i_{\ell_{2}}$
for some $1\leq\ell_{1}<\ell_{2}\leq m$, hence $ps=j_{\ell_{1}}<j_{\ell_{2}}=qs$, and again they don't contribute.

Now let $p$ be a non-fixed point and $q$ a fixed point. Then $p=i_{\ell}$ for some $\ell$, and the pair contributes to $N_1(x,s)$ if and only if
$i_{\ell}=ps<qs=q<p=j_{\ell}$
i.e. if $q\in [i_{\ell},j_{\ell}]$. Similarly, if $p$ is a fixed point and $q$ is a non-fixed point, then they contribute to $N_1(x,s)$ if and only if $j_{\ell}=qs<ps=p<q=i_{\ell}$ for some $\ell$.
In short,
\[
N_1(x,s)=\sum_{\ell}\Big(
|\{\nu\in [[i_{\ell},j_{\ell}]] \,\mid\, \nu s=\nu,\, 
x(\nu)=1\}|+
|\{\nu\in [[i_{\ell},j_{\ell}]] \,\mid\, 
x(\nu)=3\}|\Big)
\]
The same holds for $N_2(x,s)$ if we replace $x(\nu)=1$ by $x(\nu)=2$. The claim follows by the third assumption.
\end{proof}

\subsection{Canonical Young symmetrizers\label{subsec:Canonical-Young-symmetrizers}}

Let $\lambda\vdash n$ be a Young diagram with rows of length $(\lambda_{1},\dots,\lambda_{h})$
for some  height $h\in\mathbb{N}^{+}$ and consider the subgroup of row-preserving
permutations 
\[
R_{\lambda}=\mathfrak{S}_{\lambda_{1}}\times\dots\times\mathfrak{S}_{\lambda_{h}}\subseteq\mathfrak{S}_{n}.
\]
Similarly, denote by $C_{\lambda}=R_{\overline{\lambda}}$ the subgroup
of column-preserving permutations, where $\overline{\lambda}$ denotes
the transpose of the diagram $\lambda$. 

We define the Young symmetrizer corresponding to (the canonical Young tableau of) $\lambda$ as 
\[
c_{\lambda}=
\sum_{a\in R_{\lambda}}a
\sum_{b\in C_{\lambda}}\mathrm{sign}(b)b
\in\mathbb{F}\mathfrak{S}_{n}.
\]
Given a fixed Young diagram, e.g. $\lambda=(5,3,2)$, we may visualize
an element such as $x=(0,1,3,0,3,2,0,1,0,2)\in X_{4,4}$ as a coloring
of the Young diagram using the set of colors $\{0,1,2,3\}$:
\[
\young(01303,201,02)
\]
This terminology implied by the visualization makes it easier to formulate statements
such as ``there are two 3's in the first row'' as a shorthand for
$\exists i,j\leq\lambda_{1}:x(i)=x(j)=3$.

\section{Double Hooks with Even Length Tail\label{sec:even_tail}}

In this section we prove the case of Theorem \ref{thm:main} where
$\lambda$ is a double hook $(q,p,2^{d_{2}},1^{d_{1}})$ and the length
of its ``tail`` $d_1$ is even:

\begin{prop}
\label{prop:sym_even}Let $\lambda\vdash n$
be a Young diagram of the form $\lambda=(q,p,2^{d_{2}},1^{d_{1}})$ for some
$q\geq p\geq2$. If $d_{1}\equiv2\ (\mathrm{mod}\,4)$ then the multiplicity of
$M^{\lambda}$ in $\mathrm{Sym}^{2}(\Lambda^{k}V)$ is zero. Similarly,
if $d_{1}\equiv0\ (\mathrm{mod}\,4)$, then the multiplicity of $M^{\lambda}$ in
$\Lambda^{2}(\Lambda^{k}V)$ is zero.
\end{prop}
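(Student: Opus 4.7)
The plan is to follow the strategy outlined in the Idea of the Proof: construct a subspace $A\subseteq(\Lambda^k U)^{\otimes 2}$ (with $U=V\oplus \mathbf{1}$) such that $f|_A$ is surjective onto $(\Lambda^k V)^{\otimes 2}$ and
\[
(a_1\otimes a_2)\,c_\lambda \;=\; (-1)^{d_1/2}\,(a_2\otimes a_1)\,c_\lambda
\]
for every simple tensor $a_1\otimes a_2\in A$. Extending this identity by linearity and using that the tensor-flip $T$ from Corollary \ref{cor:T iso} is $\mathfrak{S}_n$-equivariant and commutes with $f$, any lift $b=f(\xi)$ with $\xi\in A$ of an element $b\in\mathrm{Sym}^2(\Lambda^k V)$ (resp.\ $\Lambda^2(\Lambda^k V)$) satisfies $b\,c_\lambda = \pm(-1)^{d_1/2}\,b\,c_\lambda$, forcing $b\,c_\lambda=0$ in exactly the cases of the proposition; since the multiplicity of $M^\lambda$ in a right $\mathfrak{S}_n$-module $W$ vanishes iff $W\,c_\lambda=0$, this is what we need.

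To build $A$, I would view the canonical Young tableau of $\lambda$ as a bijection between $[n]$ and the cells of $\lambda$, so each $x\in X_{k,k}$ induces a $\{0,1,2,3\}$-coloring of the boxes of $\lambda$. Let $X_0\subseteq X_{k,k}$ consist of those colorings whose restriction to the tail $1^{d_1}$ (the bottom $d_1$ cells of column $1$) is an alternating sequence of $d_1/2$ consecutive $(1,2)$-pairs, and whose restriction to the rows of length $\ge 2$ follows a prescribed template ensuring that the $f$-images span $(\Lambda^k V)^{\otimes 2}$. Set $A=\mathrm{span}\{w_x:x\in X_0\}$. Surjectivity of $f|_A$ is then checked by a lifting argument: each standard basis vector of $(\Lambda^k V)^{\otimes 2}$ arises (modulo $\ker f$) as the $f$-image of some $w_x$ with $x\in X_0$, with the freedom in choosing representatives absorbing the kernel ambiguity.

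The crux is the skew-symmetry on simple tensors $w_x$, $x\in X_0$. For each such $x$ I would construct a permutation $\sigma_x\in C_\lambda$ satisfying: (i) $x\sigma_x=t\circ x$, so $\sigma_x$ realizes the color-switch through the right action; (ii) $\sigma_x$ is a proper swap with respect to $x$ in the sense of Lemma \ref{lem:proper_swap}, so $\varepsilon_{x,\sigma_x}=1$ and hence $w_x\sigma_x=w_{t\circ x}=T(w_x)$; (iii) $\mathrm{sign}(\sigma_x)=(-1)^{d_1/2}$. Concretely $\sigma_x$ is the product of the $d_1/2$ column-transpositions swapping each $(1,2)$-pair in the tail, together with additional column-transpositions in the upper block (rows $1,2,\dots,2+d_2$) that exchange colors $1$ and $2$ between the two tensor factors. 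The template for $X_0$ is chosen so that the upper-block contribution to $\mathrm{sign}(\sigma_x)$ is $+1$, leaving only the $d_1/2$ tail transpositions. The desired identity $T(w_x)\,c_\lambda = (-1)^{d_1/2}\,w_x\,c_\lambda$ then follows from $w_x\sigma_x c_\lambda$ by combining the right-action identity $b_\lambda\,\sigma = \mathrm{sign}(\sigma)\,b_\lambda$ for $\sigma\in C_\lambda$ with the factorization $c_\lambda=a_\lambda b_\lambda$ and the row-symmetric structure of the intermediate vector $w_x\,a_\lambda$.

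The principal obstacle is the combinatorial bookkeeping: defining $X_0$ so that $A$ both projects onto all of $(\Lambda^k V)^{\otimes 2}$ under $f$ and admits a uniform construction of $\sigma_x$ for every $x\in X_0$, and then verifying by a case analysis on the distribution of the colors $1$, $2$ and $3$ in rows of length $\ge 2$ that the upper-block transpositions contribute trivially to the sign. This step, together with the careful manipulation of $\sigma_x$ in relation to the row-symmetrizer $a_\lambda$, is the delicate technical heart of the argument.
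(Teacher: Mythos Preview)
Your overall strategy matches the paper's, but the mechanism you propose for the skew-symmetry does not work, and the gap is exactly the central difficulty of the argument. You take $\sigma_x \in C_\lambda$ and want $w_x \sigma_x c_\lambda = \mathrm{sign}(\sigma_x)\, w_x c_\lambda$ via $c_\lambda = a_\lambda b_\lambda$ and $\sigma b_\lambda = \mathrm{sign}(\sigma) b_\lambda$. But $\sigma_x$ sits to the \emph{left} of $a_\lambda$, so you would need $\sigma_x a_\lambda = a_\lambda \sigma_x$, i.e.\ $\sigma_x$ must centralize $R_\lambda$. A column permutation does this only if every cell it moves lies in a row of length~$1$; the ``additional column-transpositions in the upper block'' are precisely the ones for which the identity fails, and the ``row-symmetric structure of $w_x a_\lambda$'' does not help (that vector is right $R_\lambda$-invariant, which says nothing about acting by $\sigma_x\in C_\lambda$). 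There is a second, related obstruction: if a row of length $\geq 2$ contains both a $1$ and a $2$, then realizing $t\circ x = x\sigma$ forces $\sigma$ to swap those two cells \emph{within the row}, which is a row transposition; so for most $x$ no $\sigma_x\in C_\lambda$ with $x\sigma_x=t\circ x$ exists at all.

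The paper's fix is to use $a_0 b_0$ with $a_0 \in R_\lambda$ and $b_0 \in C_\lambda$ centralizing $R_\lambda$, so that $a_0 b_0\, c_\lambda = \mathrm{sign}(b_0)\, c_\lambda$ genuinely holds (Lemma~\ref{lem:simplified_lemma}). This works only when every $1$ or $2$ in a long row is paired with the opposite color in that same row. When some row of length $\geq 2$ contains an \emph{unpaired} $1$ or $2$, no such $a_0 b_0$ exists, and the paper handles these cases by an explicit base computation at $n=6$ (Lemma~\ref{lem:initial}) together with an Induction Lemma (Lemma~\ref{lem:wx_vs_wxH}) that transports the $n=6$ identity into larger diagrams; see the three-case analysis in Proposition~\ref{prop:first_row_uninteresting}. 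Your ``prescribed template'' for $X_0$ could at best force every coloring into the no-unpaired case, but then the claimed surjectivity of $f|_A$ becomes genuinely doubtful and you give no argument for it. In short, the single-permutation trick you sketch covers only the easy Case~III of the paper's proof; the real content lies in the unpaired cases, which require machinery your proposal does not supply.
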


Equivalently, we prove that $\mathrm{Sym}^{2}(\Lambda^{k}V)c_{\lambda}=0$
if $d_{1}\equiv2\ (\mathrm{mod}\,4)$, where $c_{\lambda}$ is the (canonical) Young-symmetrizer
corresponding to $\lambda$, hence $M^{\lambda}$ is not a summand
of $\mathrm{Sym}^{2}(\Lambda^{k}V)$, and similarly for the exterior
square.

The steps of the proof are the following: First, in Lemma \ref{lem:initial}
we show a skew-symmetry relation for $w_{x}c_{\lambda}$ in the case
of $n=6$, using the observations of Lemma \ref{lem:two_same}. Then
we prove Lemma \ref{lem:wx_vs_wxH} so we may induce these skew-symmetries
for larger diagrams. This induction is carried out in Prop. \ref{prop:first_row_uninteresting},
under the assumption that the first row contains no $1$'s or $2$'s (this corresponds to subspace $A$ mentioned in the introduction).
Note that the cases where $n\leq 5$ are covered in Prop. \ref{prop:first_row_uninteresting}, case III.
Finally, in Lemma \ref{lem:enough_to_show_uninteresting_first_row}
we show that it was enough to prove under the assumption on the first row, as the images of the basis elements of this form under the projection $(\Lambda^{k}U)^{\otimes 2}\twoheadrightarrow (\Lambda^{k}V)^{\otimes 2}$ are generating $(\Lambda^{k}V)^{\otimes 2}$.

\subsection{Base case}

Let us make some simple observations
on the annihilation of $c_{\lambda}$ on the basis vectors. Recall
from Subsec. \ref{subsec:Canonical-Young-symmetrizers} that for a
given $\lambda$ we may visualize $x$ as a $4$-colored Young diagram.
Figures here are for illustration purposes only.
\begin{lem}\label{lem:two_same}
Let $x\in X$ and $\lambda\vdash n$ a Young 
diagram.
\begin{enumerate}
\item If there are two $1$'s or two $2$'s in the same row,
then $w_{x}c_{\lambda}=0$.
\[
\young(\ 1\ 1\ \ ,\ \ )
\]
\item If there are two $0$'s or $3$'s in the same column ,
then $w_{x}\sum_{b\in C_{\lambda}}\mathrm{sign}(b)b=0$.
\[
\young(\ 3\ \ ,\ \ \ ,\ 3)
\]
\item If for every $a\in R_{\lambda}$ there are three (resp. five) of $0$'s
and $3$'s in the first (resp. first two) columns of $xa$ in total
then $w_{x}c_{\lambda}=0$.
\end{enumerate}
\[
\young(030,33,0\ )
\]

\end{lem}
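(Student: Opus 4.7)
The plan is to treat the three parts in order, since part (3) will reduce to part (2) via the row symmetrizer. For part (1), fix two entries $i<j$ in the same row of $\lambda$ with $x(i)=x(j)\in\{1,2\}$ and set $s=(i,j)\in R_\lambda$. Since $x$ is constant on $\{i,j\}$ one has $xs=x$, whence $w_xs=\varepsilon_{x,s}w_x$, and Lemma \ref{lem:epsilon_basic_props}(2) gives $\varepsilon_{x,s}=-1$, so $w_x(1+s)=0$. The map $a\mapsto sa$ is a fixed-point-free involution on $R_\lambda$, and pairing its orbits yields
\[
w_x \sum_{a\in R_\lambda} a \;=\; \sum_{\{a,sa\}}(w_x+w_xs)a \;=\; 0,
\]
and multiplying on the right by the column antisymmetrizer gives $w_x c_\lambda=0$.

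Part (2) is dual. Take a transposition $s=(i,j)\in C_\lambda$ swapping two column-aligned positions of common color in $\{0,3\}$. Again $xs=x$, but now Lemma \ref{lem:epsilon_basic_props}(3) gives $\varepsilon_{x,s}=+1$, so $w_xs=w_x$. Partitioning $C_\lambda$ into pairs $\{b,sb\}$ and using $\mathrm{sign}(sb)=-\mathrm{sign}(b)$, each pair contributes $\mathrm{sign}(b)(w_xb-w_xsb)=0$, so the column antisymmetrizer already annihilates $w_x$.

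For part (3) I would expand
\[
w_x c_\lambda \;=\; \Big(\sum_{a\in R_\lambda}\varepsilon_{x,a}\,w_{xa}\Big)\Big(\sum_{b\in C_\lambda}\mathrm{sign}(b)\,b\Big),
\]
so it is enough to check that $w_{xa}$ is annihilated by the column antisymmetrizer for every $a\in R_\lambda$, which by part (2) reduces to exhibiting, inside each $xa$, two equal entries in $\{0,3\}$ lying in a common column. A pigeonhole argument on the two-element palette $\{0,3\}$ finishes the job: three such cells in the first column must include a repeat, and in the first-two-columns case, avoiding repeats in either column allows at most one $0$ and one $3$ per column, bounding the count by four and contradicting the hypothesis of five.

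The only mildly subtle step is (3), since the pigeonhole hypothesis must be invoked uniformly over all $a\in R_\lambda$; the remaining parts are direct applications of Lemma \ref{lem:epsilon_basic_props} combined with a coset/involution argument on $R_\lambda$ and $C_\lambda$ respectively.
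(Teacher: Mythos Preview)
Your proof is correct and follows essentially the same approach as the paper: parts (1) and (2) use the transposition $(i,j)$ together with Lemma~\ref{lem:epsilon_basic_props}(2)/(3), and part (3) reduces to part (2) via pigeonhole on the two-element palette $\{0,3\}$. The paper phrases the absorption more tersely as $c_\lambda=a_0c_\lambda$ and $\sum_b\mathrm{sign}(b)b=-b_0\sum_b\mathrm{sign}(b)b$, whereas you spell out the underlying pairing explicitly, but the content is the same.
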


\begin{proof}
Let $a_{0}\in R_{\lambda}$ be the transposition $(i,j)$.
The first statement follows from $c_{\lambda}=a_{0}c_{\lambda}$ (by
the definition of $c_{\lambda}$) and $w_{x}a_{0}=-w_{xa_{0}}=-w_{x}$
by Lemma \ref{lem:epsilon_basic_props}/2.

For the second statement denote $s=\sum_{b\in C_{\lambda}}\mathrm{sign}(b)b$
and let $b_{0}\in C_{\lambda}$ be the transposition $(i,j)$. Then
we have $s= -b_{0}s$ and $w_{x}b_{0}=w_{xb_{0}}=w_{x}$ by Lemma \ref{lem:epsilon_basic_props}/3.

The last statement follows from the previous one directly.
\end{proof}

\begin{lem}
\label{lem:initial}Let $\lambda\vdash6$ be a Young diagram with
$\lambda_{1}=2$. Let $x\in X_{k,l}$ be a coloring such that in the
first row of $\lambda$ there are no elements of color $1$ or $2$, and in every
other row with length at least two, there is at least one
element of color $0$ or $3$.
\begin{enumerate}
\item If $\lambda$ is $(2,2,2)$, then
\begin{equation}
w_{x}c_{\lambda}=w_{t\circ x}c_{\lambda}.\label{eq:initial222}
\end{equation}
In particular, if $k\neq l$, then both sides of Eq. \ref{eq:initial222}
are zero.
\item If $\lambda$ is $(2,2,1,1)$, $k=l$ and at least one of the elements
on the tail is of color 0 or 3, then
\begin{equation}
w_{x}c_{\lambda}=-\,w_{t\circ x}c_{\lambda}.\label{eq:initial2211}
\end{equation}
\end{enumerate}
\end{lem}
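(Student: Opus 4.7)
The plan is to prove both identities by a uniform three-stage strategy: (i) reduce to a short list of admissible colorings $x$ via the hypotheses and Lemma \ref{lem:two_same}, (ii) for each surviving $x$ construct an explicit permutation $s \in \mathfrak{S}_n$ realizing $xs = t \circ x$ and compute $\varepsilon_{x,s}$ via Lemma \ref{lem:proper_swap}, and (iii) absorb $s$ through $c_\lambda$ to deduce the signed identity.

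For stage (i), the hypotheses (no $1$'s or $2$'s in the first row, at least one $\{0, 3\}$-cell in each other length-$2$ row, and in Part 2 at least one $\{0, 3\}$-cell in the tail) combined with Lemma \ref{lem:two_same}(1)--(3) force $x$ to contain at most two cells of color in $\{1, 2\}$; modulo symmetries of $\lambda$ their positions admit only a short list of configurations. For $\lambda = (2,2,2)$ with $k \neq l$, direct application of Lemma \ref{lem:two_same}(3) to the first two columns handles most subcases; the residual subcase (both $\{1,2\}$-cells sharing a color) is handled by a term-by-term inspection of the expansion $w_x c_\lambda = \sum_{a \in R_\lambda} \varepsilon_{x,a} w_{xa} b_\lambda'$, using Lemma \ref{lem:two_same}(2) together with the standard vanishing $u_J \cdot \mathrm{Alt}_C = 0$ whenever $|J \cap C| \leq |C|-2$.

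For stage (ii), in the nontrivial cases, the $1$-cells and $2$-cells of $x$ come in a matched pair, say at positions $p_1$ (color $1$) and $p_2$ (color $2$). I would construct $s$ as a product of transpositions pairing $p_1 \leftrightarrow p_2$ built from elements of $R_\lambda$ (row transpositions) and $C_\lambda$ (column transpositions): a single such transposition suffices when $p_1, p_2$ share a row or column, and otherwise $s$ decomposes as $s_R s_C s_R$ with $s_R \in R_\lambda$, $s_C \in C_\lambda$. By construction $s$ meets the hypotheses of Lemma \ref{lem:proper_swap}, yielding $\varepsilon_{x,s} = +1$ and thus $w_{t \circ x} c_\lambda = w_x s c_\lambda$.

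Stage (iii) reduces (via the immediate absorption $s_R c_\lambda = c_\lambda$) to showing $w_x s_C c_\lambda = \pm w_x c_\lambda$ for a single column transposition $s_C \in C_\lambda$, with sign $+1$ in Part 1 and $-1$ in Part 2. This is the main obstacle: since $s_C \notin R_\lambda$, the identity $s_C c_\lambda = \pm c_\lambda$ fails in the group algebra, so one must expand $s_C c_\lambda \mp c_\lambda$ as a sum in $\mathbb{F}\mathfrak{S}_n$ and verify term by term that each resulting summand, when applied to $w_x$, is annihilated by Lemma \ref{lem:two_same}(1) or (2). The sign distinction between Parts 1 and 2 arises from the column structure of $\lambda$: the first column of $(2,2,2)$ has three cells entirely inside the body, whereas the first column $\{1, 3, 5, 6\}$ of $(2,2,1,1)$ includes the two tail cells, shifting the parity of the absorption and producing the overall sign flip in Eq.~\ref{eq:initial2211}.
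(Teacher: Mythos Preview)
Your outline has a real gap in the ``standard vanishing'' step. The identity $u_J\cdot\mathrm{Alt}_C=0$ when $|J\cap C|\le|C|-2$ is correct for a \emph{single} wedge $u_J$, but it does not transfer to $w_x=u_I\otimes u_J$ under the diagonal action: $(u_I\otimes u_J)\,\mathrm{Alt}_C=\sum_\sigma\mathrm{sign}(\sigma)(u_I\sigma)\otimes(u_J\sigma)$ does not factor, so vanishing of $u_J\,\mathrm{Alt}_C$ alone says nothing. Concretely, for $\lambda=(2,2,2)$ and $x_1=(0,0,1,3,1,3)$ (the ``both $\{1,2\}$-cells sharing a color'' subcase), after expanding over $R_\lambda$ the surviving terms $w_{x_1(34)}b_\lambda$ and $w_{x_1(56)}b_\lambda$ are each individually nonzero---neither Lemma~\ref{lem:two_same}(2) nor your vanishing kills them---and they only cancel once you identify them via a column permutation such as $(35)(46)b_\lambda=b_\lambda$. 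The same phenomenon recurs in Stage~(iii): the terms of $w_x(s_C\mp 1)c_\lambda$ are not annihilated one at a time by Lemma~\ref{lem:two_same}; they cancel in pairs, and exhibiting those cancellations is precisely the content of the direct computation.

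A secondary issue: your absorption argument in Stage~(iii) is imprecise. Writing $s=s_R s_C s_R'$ and absorbing the right $s_R'$ into $c_\lambda$ leaves $w_x s_R s_C c_\lambda=\varepsilon_{x,s_R}\,w_{xs_R}s_C c_\lambda$, so you are now working with $w_{xs_R}$ rather than $w_x$, and must track $\varepsilon_{x,s_R}$ as well. This is manageable but not the clean reduction you describe, and your heuristic for the sign (body versus tail column structure) is not an argument. The paper's proof simply acknowledges (Remark~\ref{rem:switch 03}) that this is a finite check and carries it out: after using four symmetry observations (the $T$ and $P$ isomorphisms, row permutations, and the $1\leftrightarrow 2$ swap) to cut the list to four colorings in Part~1 and two in Part~2, it expands $w_xc_\lambda$ and $w_{t\circ x}c_\lambda$ directly, killing most terms with Lemma~\ref{lem:two_same}(2) and matching the survivors via explicit column permutations. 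Your Stage~(iii), done correctly, would unwind to exactly this.
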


\begin{rem}
\label{rem:switch 03}
Equations \ref{eq:initial222} and \ref{eq:initial2211} can be checked one by one for the finitely many basis vectors, thus we could safely ignore  the proof of Lemma \ref{lem:initial}.  Nonetheless, we have decided to present a proof in detail, so that we can provide some explicit calculations using the notation introduced in Section \ref{sec:Preliminaries}, which might prove useful later in following the general argument.
\end{rem}

\begin{proof}
We start the proof of the lemma with four general observations on
the action $w_{x}c_{\lambda}$ to reduce the number of cases where
equations \ref{eq:initial222} and \ref{eq:initial2211} are needed
to be checked.
\\ \textbf{Preliminary observations:}

First, $w_{x}c_{\lambda}=0$ implies $w_{t\circ x}c_{\lambda}=0$,
since $T:\Lambda^{k}U\otimes\Lambda^{l}U\to\Lambda^{l}U\otimes\Lambda^{k}U$
is an $\mathbb{F}\mathfrak{S}_{n}$-module isomorphism (Cor. \ref{cor:T iso}).

Second, if $w_{x}c_{\lambda}=\rho w_{t\circ x}c_{\lambda}$ for some
$\rho\in\{1,-1\}$ and $a\in R_{\lambda}$ is an arbitrary row permutation,
then $w_{xa}c_{\lambda}=\rho w_{t\circ xa}c_{\lambda}$. Indeed 
\begin{align*}
 & w_{xa}c_{\lambda}=\varepsilon_{x,a}w_{x}ac_{\lambda}=\varepsilon_{x,a}w_{x}c_{\lambda}=\varepsilon_{x,a}\rho w_{t\circ x}c_{\lambda}=\\
 & \varepsilon_{x,a}\rho w_{t\circ x}ac_{\lambda}=\varepsilon_{x,a}\varepsilon_{t\circ x,a}\rho w_{t\circ xa}c_{\lambda}=\rho w_{t\circ xa}c_{\lambda},
\end{align*}
where equation $ac_{\lambda}=c_{\lambda}$ was used in the second and fourth
step, and Lemma \ref{lem:epsilon_basic_props}/1 in the last step.
Our second observation means that if the statement of the lemma holds
for some $w_{x}$ corresponding to a coloring $x$, then it will also
hold for any other $w_{x'}$, where $x'$ is derived from $x$ by
rearranging the colors of $x$ in the rows of $\lambda$.

Third, using the fact that $w_{t\circ t\circ x}c_{\lambda}=w_{x}c_{\lambda}$,
it can be assumed that the coloring $x$ 
contains at least as many elements of color 1 as of color 2. Moreover, in case of nonzero equality, the first element of color 1 is smaller than the
first element of color 2 (that is $\min(x^{-1}(\{1\}))<\min(x^{-1}(\{2\}))$).

Fourth, if Lemma \ref{lem:initial} holds for some basis vector $w_x$ then it also holds for $w_{p\circ t\circ x}$ i.e. when we swap the colors $0$ and $3$. Recall that $p(c)=3-c$ for any $c\in\{0,1,2,3\}$.
\[
w_x c_{\lambda} = \rho \, w_{t\circ x} c_\lambda 
\quad \Longrightarrow \quad
w_{p\circ x} c_{\lambda} = \rho \, w_{p\circ t\circ x} c_\lambda
\quad \Longrightarrow \quad
w_{p \circ t\circ x} c_{\lambda} = \rho \, w_{t \circ p\circ t\circ x} c_\lambda
\]
where $\rho \in \{1,-1\}$. Indeed, by Lemma \ref{lem:P iso} $w_x \mapsto h(x) w_{p\circ x}$ is an $\mathbb{F}\mathfrak{S}_n$-isomorphism. By definition $h(x) = h(t\circ x)$ so the first implication holds. 
For the second implication we 
used the fact that actions $p$ and $t$ are commuting on the colorings. 
\\ \textbf{Proof of Equation \ref{eq:initial222}:}

Let's determine those $w_{x}$ vectors which shall
be investigated in order the prove Eq. \ref{eq:initial222}.
By the hypotheses, it is clear that coloring $x$ contains at least
four elements of color 0 or 3. Combining Lemma \ref{lem:two_same}/3 with the first observation, we get that it will be enough to investigate those colorings $x$, which satisfy $|x^{-1}(\{0\})|=2$ and $|x^{-1}(\{3\})|=2$.
By the second observation, we may assume that the elements of color 0 or 3 are on
the following positions of $\lambda$: 
\[
\Yvcentermath1\young(**,~*,~*).
\]
We show in the next paragraph that it is sufficient to check Equation \ref{eq:initial222} for the following colorings:
\[
\Yvcentermath1x_{1}=\young(00,13,13),\ x_{2}=\young(00,13,23),\ x_{3}=\young(03,10,13),\ x_{4}=\young(03,10,23).
\]
Two cases are distinguished based on whether the colors in the
first row are equal or not. If they equal, then by the fourth observation we may assume that $x(1)=x(2)=0$
and $x(4)=x(6)=3$. If they are
not equal, then with the help of the second and fourth observations we might assume that $x(1)=0,$
$x(2)=3,$ $x(4)=0$ and $x(6)=3$.
The two remaining entries are of color 1 and 2. They might have identical
colors or different colors. Using the third observation two cases
shall be investigated $x(3)=x(5)=1$, and $x(3)=1,x(5)=2$.
Let us list the basis vectors corresponding to the identified critical colorings: 
\[
w_{x_{1}}=u_{3456}\otimes u_{46},w_{x_{2}}=u_{346}\otimes u_{456},w_{x_{3}}=u_{2356}\otimes u_{26},w_{x_{4}}=u_{236}\otimes u_{256}.
\]
We will show in details how to handle the actions $w_{x_{1}}c_{\lambda}$
and $w_{x_{4}}c_{\lambda}$, as the investigation of these actions
contain all the necessary type of computational steps needed for the
remaining two cases. We start with expanding the Young symmetrizer
$c_{\lambda}=a_{\lambda}b_{\lambda}$. Here 
$a_{\lambda}=(1+(12))(1+(34))(1+(56))$ by definition,
so 
\[
w_{x_{1}}c_{\lambda}=w_{x_{1}}a_{\lambda}b_{\lambda}=2w_{x_{1}}(1+(34))(1+(56))b_{\lambda}.
\]
Now we check the action of (1 + (34))(1 + (56)) on the coloring $x_{1}$:
\[
\Yvcentermath1x_{1}1=\young(00,13,13),\ x_{1}(34)=\young(00,31,13),\ x_{1}(56)=\young(00,13,31),\ x_{1}(34)(56)=\young(00,31,31).
\]
By Lemma \ref{lem:two_same}/2. $b_{\lambda}$ will annihilate the
terms coming from $w_{x_{1}}1$ and $w_{x_{1}}(34)(56)$, as the result
of these actions on $x_{1}$ contain two elements of color 3 in the
same column. Using this fact, and that $(35)(46)b_{\lambda}=b_{\lambda}$,
we get: 
\begin{align*}
&2w_{x_{1}}(1+(34))(1+(56))b_{\lambda}=2w_{x_{1}}((34)+(56))b_{\lambda}=\\
&2u_{3456}\otimes u_{46}(34)b_{\lambda}+2u_{3456}\otimes u_{46}(56)(35)(46)b_{\lambda}=\\&2u_{4356}\otimes u_{36}b_{\lambda}+2u_{5643}\otimes u_{63}b_{\lambda}=
-2u_{3456}\otimes u_{36}b_{\lambda}+2u_{3456}\otimes u_{36}b_{\lambda}=0.
\end{align*}
By the first observation Equation \ref{eq:initial222} follows for $w_{x_{1}}$.

Now let us consider $w_{x_{4}}c_{\lambda}$. The annihilation property of
$b_{\lambda}$ stated in Lemma \ref{lem:two_same}/2. allows us to
consider only two summands of $a_{\lambda}$, $(56)$ and $(12)(34)$. 
By this fact and that
$(135)(264)b_{\lambda}=(153)(246)b_{\lambda}=b_{\lambda}$ we get:
\begin{align*}
&w_{x_{4}}c_{\lambda}=w_{x_{4}}(1+(12))(1+(34))(1+(56))b_{\lambda}=w_{x_{4}}((56)+(12)(34))b_{\lambda}=\\
&u_{236}\otimes u_{256}((56)+(12)(34))b_{\lambda}=u_{235}\otimes u_{265}b_{\lambda}+u_{146}\otimes u_{156}b_{\lambda}=\\
&u_{235}\otimes u_{265}(135)(264)b_{\lambda}+u_{146}\otimes u_{156}(153)(246)b_{\lambda}=\\
&u_{651}\otimes u_{641}b_{\lambda}+u_{562}\otimes u_{532}b_{\lambda}=u_{156}\otimes u_{146}b_{\lambda}-u_{256}\otimes u_{235}b_{\lambda}.
\end{align*}
On the other hand 
\[
\Yvcentermath1t\circ x_{4}=\young(03,20,13)\ \ \textrm{and}\ w_{t\circ x_{4}}=u_{256}\otimes u_{236},
\]
so 
\begin{align*}
&w_{t\circ x_{4}}c_{\lambda}=w_{t\circ x_{4}}(1+(12))(1+(34))(1+(56))b_{\lambda}=w_{t\circ x_{4}}((56)+(12)(34))b_{\lambda}=\\
&u_{265}\otimes u_{235}b_{\lambda}+u_{156}\otimes u_{146}b_{\lambda}=-u_{256}\otimes u_{235}b_{\lambda}+u_{156}\otimes u_{146}b_{\lambda},
\end{align*}
which means that Equation \ref{eq:initial222} holds for $w_{x_{4}}$. For
the remaining two basis vectors we provide the raw computations. \vspace{0.5cm}
\begin{align*}
&w_{x_{2}}c_{\lambda}  =2u_{346}\otimes u_{456}((34)+(56))((35)(46))b_{\lambda}=2(-u_{456}\otimes u_{345}-u_{356}\otimes u_{346})b_{\lambda}.\\
&w_{t\circ x_{2}}c_{\lambda}  =2u_{456}\otimes u_{346}((34)+(56))b_{\lambda}=2(-u_{356}\otimes u_{346}-u_{456}\otimes u_{345})b_{\lambda}.\\
&w_{x_{3}}c_{\lambda}  =u_{2356}\otimes u_{26}((12)(34))b_{\lambda}+u_{2356}\otimes u_{26}(56)((135)(264))b_{\lambda}=0.
\end{align*}
\newpage
\textbf{Proof of Equation \ref{eq:initial2211}:} 

For the second part of the lemma, take $\lambda=(2,2,1,1)$. The hypotheses
combined with Lemma \ref{lem:two_same}/3 and the first observation give, that it is enough to investigate those colorings $x$, which satisfy $|x^{-1}(\{0\})|=|x^{-1}(\{3\})|=2$
and $|x^{-1}(\{1\})|=|x^{-1}(\{2\})|=1$. 
Using the second observation, there are two
possible configurations for the location of elements of color 0 and
3: 
\[
\Yvcentermath1\young(**,~*,*,~) \qquad \ \young(**,~*,~,*).
\]
However, it will be sufficient to prove Eq. \ref{eq:initial2211} for
the first configuration, as we can act with $(56)$ on \ref{eq:initial2211},
providing a proof for the second type of configurations. Here we used
the fact that $(56)$ commutes with $c_{\lambda}$. Now we can list the critical colorings that are needed to be checked, just
as in the first part of the lemma: 
\[
\Yvcentermath1x_{1}=\young(00,13,3,2), \qquad x_{2}=\young(03,10,3,2).
\]
The corresponding basis vectors are the following: 
\[
w_{x_{1}}=u_{345}\otimes u_{456}, \qquad w_{x_{2}}=u_{235}\otimes u_{256}.
\]
The computations are a bit simpler than the ones we have already seen
in the first part of the lemma. 
\begin{align*}
&w_{x_{1}}c_{\lambda}=-2u_{345}\otimes u_{456}((36))b_{\lambda}=-2u_{456}\otimes u_{345}b_{\lambda}.\\
&w_{t\circ x_{1}}c_{\lambda}=2u_{456}\otimes u_{345}b_{\lambda}.\\
&w_{x_{2}}c_{\lambda}=-u_{123}\otimes u_{126}((36))b_{\lambda}=-u_{126}\otimes u_{123}b_{\lambda}.\\
&w_{t\circ x_{2}}c_{\lambda}=u_{126}\otimes u_{123}b_{\lambda}.
\end{align*}
The proof of Lemma \ref{lem:initial} is complete. 
\end{proof}

\subsection{Technical lemmas for the induction step\label{subsec:Main-lemma}}

In this subsection we prove Lemma \ref{lem:wx_vs_wxH} that is used
in the inductive step of the proof of Prop. \ref{prop:sym_even}.
First, let us consider the following simplified version of the lemma. 

Recall the definition of proper swap from Subsec. \ref{subsec:Proper-Swaps}.
\begin{lem}[Simplified Induction Lemma]
\label{lem:simplified_lemma}Let $\lambda\vdash n$ a Young diagram,
$x\in X$, $a_{0}\in R_{\lambda}$ and $b_{0}\in C_{\lambda}$
such that 
\begin{enumerate}
\item $a_{0}b_{0}$ is a proper swap with respect to $x$, 
\item $t\circ x=xa_{0}b_{0}$, and
\item $b_{0}$ centralizes $R_{\lambda}$. 
\end{enumerate}
Then $w_{x}c_{\lambda}=\mathrm{sign}(b_{0})w_{t\circ x}c_{\lambda}$.
\end{lem}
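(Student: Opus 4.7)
The plan is to apply the three hypotheses in sequence: condition (1) converts $w_x$ into $w_{t\circ x}$ without a sign, condition (3) lets us commute $b_0$ past $a_\lambda$ so that $a_0b_0 c_\lambda$ collapses to a scalar multiple of $c_\lambda$, and condition (2) ties these together. In more detail, I would first compute
\[
w_x (a_0 b_0) = \varepsilon_{x,a_0b_0}\, w_{x a_0 b_0}.
\]
By hypothesis (1), $a_0 b_0$ is a proper swap with respect to $x$, so the Proper Swap Lemma (Lemma \ref{lem:proper_swap}) gives $\varepsilon_{x,a_0b_0}=1$. Combined with hypothesis (2), this yields $w_x(a_0 b_0) = w_{t\circ x}$.

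Next I would analyze $a_0 b_0 c_\lambda$ inside $\mathbb{F}\mathfrak{S}_n$. Writing $c_\lambda = a_\lambda b_\lambda$ where $a_\lambda = \sum_{a\in R_\lambda} a$ and $b_\lambda = \sum_{b\in C_\lambda}\mathrm{sign}(b) b$, hypothesis (3) (that $b_0$ centralizes $R_\lambda$) gives $b_0 a_\lambda = a_\lambda b_0$, so
\[
a_0 b_0 c_\lambda = a_0 b_0 a_\lambda b_\lambda = a_0 a_\lambda b_0 b_\lambda.
\]
Since $a_0 \in R_\lambda$, the left multiplication $a\mapsto a_0 a$ permutes $R_\lambda$, giving $a_0 a_\lambda = a_\lambda$. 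Similarly, since $b_0 \in C_\lambda$, the substitution $b' = b_0 b$ yields $b_0 b_\lambda = \mathrm{sign}(b_0)\, b_\lambda$. Therefore $a_0 b_0 c_\lambda = \mathrm{sign}(b_0)\, c_\lambda$.

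Putting both steps together, and using associativity of the action,
\[
w_{t\circ x}\, c_\lambda = \bigl(w_x (a_0 b_0)\bigr) c_\lambda = w_x (a_0 b_0 c_\lambda) = \mathrm{sign}(b_0)\, w_x\, c_\lambda.
\]
Multiplying both sides by $\mathrm{sign}(b_0)$ (which squares to $1$) gives the claimed equation $w_x c_\lambda = \mathrm{sign}(b_0)\, w_{t\circ x} c_\lambda$. There is essentially no obstacle in this argument: the three conditions were chosen exactly so that the sign from Eq.~\ref{eq:signs_def} vanishes (condition 1), the row- and column-sums can be decoupled (condition 3), and the product $x \mapsto x a_0 b_0$ realizes the color swap $t$ (condition 2). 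The real work lies in the next step of the paper, where one will have to actually produce such pairs $(a_0,b_0)$ from inductive hypotheses on smaller diagrams.
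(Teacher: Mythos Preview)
Your proof is correct and follows essentially the same approach as the paper: both arguments use the Proper Swap Lemma to get $\varepsilon_{x,a_0b_0}=1$, use that $b_0$ centralizes $R_\lambda$ to commute it past $a_\lambda$, and then absorb $a_0$ and $b_0$ into the row- and column-sums respectively. The only cosmetic difference is the order of the two steps---the paper first rewrites $c_\lambda=\mathrm{sign}(b_0)\,a_0b_0c_\lambda$ and then applies the proper swap, whereas you first compute $w_x a_0b_0=w_{t\circ x}$ and then simplify $a_0b_0c_\lambda$.
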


In more colorful language, the lemma says that if we may mimic the
action of $t$ on $x$ by a proper swap $a_{0}b_{0}\in R_{\lambda}C_{\lambda}$
where $b_{0}$ only moves the tail of $\lambda$, then a skew-symmetry
relation holds. For example, if $\lambda$ and $x$ are visualized
as
\[
\young(3102,012,2,3,1)
\]
then $a_{0}=(2,4)(6,7)$, $b_{0}=(8,10)$ satisfies the assumptions
of the lemma and hence $w_{x}c_{\lambda}=-w_{t\circ x}c_{\lambda}$.
Note that this may happen only if $x\in X_{k,k}$ for some $k\in \mathbb{N}^+$.
\begin{proof}
Denote $\mathrm{sign}(b_{0})=\delta$. As $b_{0}$ centralizes $R_{\lambda}$
we obtain
\[
w_{x}c_{\lambda}=\delta w_{x}\sum_{a\in R_{\lambda}}a_{0}a\sum_{b\in C_{\lambda}}\mathrm{sign}(b)b_{0}b=\delta w_{x}a_{0}b_{0}c_{\lambda}. 
\]
We may apply Lemma \ref{lem:proper_swap} to $a_{0}b_{0}$, so $\varepsilon_{x,a_{0}b_{0}}=1$.
Hence, we may continue as
\[
=\delta w_{xa_{0}b_{0}}c_{\lambda}=\delta w_{t\circ x}c_{\lambda}
\]
as we claimed.
\end{proof}

For the generalization of Lemma \ref{lem:simplified_lemma}, let us
define restrictions of Young symmetrizers. Let $H\subseteq[n]$, $\lambda\vdash n$
a Young diagram and $x\in X$. Recall that $T_{\lambda}$ denotes
its canonical Young tableau i.e. $T_{\lambda}$ is $\lambda$ filled
with $1,2,\dots,n$ row-continuously from left to right, from top
to bottom. We will say that \emph{$H$ is compatible with $\lambda$}
if the subset of $T_{\lambda}$ determined by $H$ is left-aligned
and has non-increasing row lengths. 
For example, the cells marked with '$H$' in
\[
\young(HHH\ \ \ ,\ \ \ \ ,HH\ ) \]
form a subset compatible with $\lambda$.

For any $H\subseteq[n]$
denote by $R_{\lambda}(H)$ (resp. $C_{\lambda}(H)$) the
pointwise stabilizer of $H^c=[n]\backslash H$ in $R_{\lambda}$ (resp. $C_{\lambda}$), and similarly
let
\[
R_{\lambda}(H^{c})=\mathrm{Stab}_{R_{\lambda}}(H)\qquad C_{\lambda}(H^{c})=\mathrm{Stab}_{C_{\lambda}}(H) %
\]
be the pointwise stabilizers of $H$ in $R_{\lambda}$ and $C_{\lambda}$ respectively.

Define the $H$\emph{-restricted Young symmetizer} of $\lambda$ as
\[
c_{\lambda,H}\overset{\mathrm{def}}{=}\sum_{a\in R_{\lambda}(H)}a\sum_{b\in C_{\lambda}(H)}\mathrm{sign}(b)b\in\mathrm{Sym}(H)\subseteq\mathfrak{S}_{n}
\]
We will also need an $H$-restricted notion of the color-swap $x\mapsto t\circ x$ defined in Subsec. \ref{subsec:Color-switch (12)}. 
Denote by $t_{H}:X\to X$ the map
\[
t_{H}(x)(i)=\begin{cases}
1 & \textrm{if }i\in H\textrm{ and }x(i)=2\\
2 & \textrm{if }i\in H\textrm{ and }x(i)=1\\
x(i) & \textrm{otherwise}
\end{cases}
\]
Note that while $(t\circ x)s=t\circ(xs)$ for any $s\in\mathfrak{S}_{n}$,
the same does not hold for $t_{H}$. 
Let us illustrate the action of $t_H$ using the example
for $H$ given above and some $x\in X_{3,5}$: 
\[
\begin{array}{c}
\\
\young(012123,0000,220)
\end{array}\ \overset{t_{H}}{\longmapsto}\ \begin{array}{c}
\\
\young(021123,0000,110)
\end{array}
\]

The following lemma helps us to deduce relations of the form $w_{x}c_{\lambda}=\pm w_{t\circ x}c_{\lambda}$
given similar relations with $c_{\lambda,H}$.
\begin{lem}[Induction Lemma]
\label{lem:wx_vs_wxH}Let $\lambda\vdash n$ a Young diagram, $x\in X$
and $\rho,\delta\in\{1,-1\}$. Assume that $H\subseteq[n]$ is compatible
with $\lambda$. Moreover, assume that for all $r\in R_{\lambda}$
if $w_{xr}c_{\lambda,H}$ is non-zero, then there exists $a_{r}\in R_{\lambda}(H^{c})$
and $b_{r}\in C_{\lambda}(H^{c})$ such that 
\begin{enumerate}
\item $w_{xr}c_{\lambda,H}=\rho\,w_{t_{H}(xr)}c_{\lambda,H}$, 
\item $a_{r}b_{r}$ is a proper swap with respect to $t_{H}(xr)$,
\item $t_{H}(xr)a_{r}b_{r}=t\circ xr$,
\item $b_{r}$ centralizes $R_{\lambda}(H^{c})$ and $\mathrm{sign}(b_{r})=\delta$.
\end{enumerate}
Then $w_{x}c_{\lambda}=(\rho\delta)w_{t\circ x}c_{\lambda}$.
\end{lem}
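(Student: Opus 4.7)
The plan is to reduce the problem to a pointwise identity at the level of the product $c_{\lambda,H}\,c_{\lambda,H^c}$, where $c_{\lambda,H^c}$ denotes the analogous $H^c$-restricted Young symmetrizer $\sum_{a\in R_\lambda(H^c)}a\sum_{b\in C_\lambda(H^c)}\mathrm{sign}(b)\,b$, and then lift this identity to $c_\lambda$ via an explicit factorization. The key observation enabling this strategy is that $c_{\lambda,H}\in\mathrm{Sym}(H)$ and $c_{\lambda,H^c}\in\mathrm{Sym}(H^c)$ commute, as do the $a_r,b_r$ with $c_{\lambda,H}$.

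First, for each $r\in R_\lambda$ I would establish
\[
w_{xr}\,c_{\lambda,H}\,c_{\lambda,H^c}\;=\;\rho\delta\,w_{t\circ xr}\,c_{\lambda,H}\,c_{\lambda,H^c}.
\]
If $w_{xr}c_{\lambda,H}=0$ then, since $T$ of Corollary~\ref{cor:T iso} is an $\mathbb{F}\mathfrak{S}_n$-isomorphism sending $w_{xr}$ to $w_{t\circ xr}$, also $w_{t\circ xr}c_{\lambda,H}=0$ and both sides vanish. Otherwise, start from hypothesis (1) multiplied on the right by $c_{\lambda,H^c}$, use that $a_r,b_r$ commute with $c_{\lambda,H}$, and invoke (2)--(3) with Lemma~\ref{lem:proper_swap} (so that $w_{t_H(xr)}a_rb_r=w_{t\circ xr}$). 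The claim then reduces to the absorption identity $a_rb_r\,c_{\lambda,H^c}=\delta\,c_{\lambda,H^c}$, which follows because $a_r\in R_\lambda(H^c)$ absorbs into the row sum in $c_{\lambda,H^c}$, hypothesis (4) lets me commute $b_r$ past this row sum, and $b_r\in C_\lambda(H^c)$ with $\mathrm{sign}(b_r)=\delta$ absorbs into the column alternation with factor $\delta$.

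Next, summing this pointwise identity over $r\in R_\lambda$ weighted by $\varepsilon_{x,r}$, and using $\varepsilon_{x,r}=\varepsilon_{t\circ x,r}$ (Lemma~\ref{lem:epsilon_basic_props}(1)) together with $(t\circ x)r=t\circ(xr)$, one obtains
\[
w_x\,a_\lambda\,c_{\lambda,H}\,c_{\lambda,H^c}\;=\;\rho\delta\,w_{t\circ x}\,a_\lambda\,c_{\lambda,H}\,c_{\lambda,H^c},
\]
where $a_\lambda=\sum_{r\in R_\lambda}r$.

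Finally, to lift to $c_\lambda$ itself, the compatibility of $H$ with $\lambda$ makes $C_\lambda(H)\times C_\lambda(H^c)$ a well-defined subgroup of $C_\lambda$. Choosing right coset representatives $\{\tau\}$ and setting $T'=\sum_\tau\mathrm{sign}(\tau)\tau$, one factors $b_\lambda=b_\lambda^H\,b_\lambda^{H^c}\,T'$, where $b_\lambda^H=\sum_{b\in C_\lambda(H)}\mathrm{sign}(b)b$ (and similarly for $H^c$). Combined with the elementary identity $a_\lambda\cdot\sum_{a\in R_\lambda(H)}a=|R_\lambda(H)|\,a_\lambda$ (and the $H^c$ analogue), this yields
\[
a_\lambda\,c_{\lambda,H}\,c_{\lambda,H^c}\,T'\;=\;|R_\lambda(H)|\cdot|R_\lambda(H^c)|\cdot c_\lambda.
\]
Multiplying the summation identity on the right by $T'$ and dividing by the positive integer $|R_\lambda(H)|\,|R_\lambda(H^c)|$ concludes $w_x c_\lambda=\rho\delta\,w_{t\circ x}c_\lambda$. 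The main obstacle is the last step: recognizing and verifying the factorization of $c_\lambda$ through $c_{\lambda,H}\,c_{\lambda,H^c}$ and the mixing coset element $T'$, which requires care because in general $R_\lambda\neq R_\lambda(H)\times R_\lambda(H^c)$ and $C_\lambda\neq C_\lambda(H)\times C_\lambda(H^c)$. Everything else is a routine application of the four hypotheses.
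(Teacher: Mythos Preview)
Your proof is correct and follows essentially the same route as the paper's. The paper bundles your $c_{\lambda,H^c}\,T'$ into a single element $s_0$ and uses left coset representatives of $R_\lambda(H)\times R_\lambda(H^c)$ in $R_\lambda$ instead of summing over all of $R_\lambda$ and dividing by $|R_\lambda(H)|\,|R_\lambda(H^c)|$ at the end; apart from this bookkeeping, the commutation steps, the use of hypotheses (1)--(4), and the absorption identity $a_rb_r\,c_{\lambda,H^c}=\delta\,c_{\lambda,H^c}$ (the paper's $s_0=\delta a_ib_is_0$) are identical. Your worry about $R_\lambda\neq R_\lambda(H)\times R_\lambda(H^c)$ is unfounded: the factorization only needs $R_\lambda(H),R_\lambda(H^c)\leq R_\lambda$ for the absorption $a_\lambda\cdot\sum_{a\in R_\lambda(H)}a=|R_\lambda(H)|\,a_\lambda$, and $C_\lambda(H)\times C_\lambda(H^c)\leq C_\lambda$ (automatic, since both factors lie in $C_\lambda$ with disjoint supports) for the coset factorization of $b_\lambda$.
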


In short, if we may supplement the restricted color-swap $t_{H}$
with an element $a_{r}b_{r}\in R_{\lambda}(H^{c})C_{\lambda}(H^{c})$
such that they together mimic the action of $t$ on $xr$, where $b_{r}$
only moves the tail of $\lambda$ outside of $H$ (and this holds
for all $r\in R_{\lambda})$, then we may lift the skew-symmetry relation
(given in $(1)$) from $H$ to $[n]$.
\begin{proof}
Let $(r_{i})_{i\in I}$ resp. $(s_{j})_{j\in J}$ be a set of representatives
for the left cosets resp. right cosets of the subgroups
\[
R_{\lambda}(H)\times R_{\lambda}(H^{c})\subseteq R_{\lambda}\qquad\textrm{resp.}\qquad C_{\lambda}(H)\times C_{\lambda}(H^{c})\subseteq C_{\lambda}.
\]
In particular, we may write
\[
\sum_{r\in R_{\lambda}}r=\sum_{i\in I}\sum_{\substack{a\in R_{\lambda}(H)\\
a'\in R_{\lambda}(H^{c})
}
}r_{i}aa'\qquad\sum_{c\in C_{\lambda}}\mathrm{sign}(c)c=\sum_{j\in J}\sum_{\substack{b\in C_{\lambda}(H)\\
b'\in C_{\lambda}(H^{c})
}
}\mathrm{sign}(bb's_{j})bb's_{j}.
\]
Let's start to compute $w_{x}c_{\lambda}$. Since $R_{\lambda}(H^{c})$
centralizes $C_{\lambda}(H)$ we have
\begin{align}
w_{x}c_{\lambda} & =w_{x}\sum_{a,a',i}r_{i}aa'\sum_{b,b',j}\mathrm{sign}(bb's_{j})bb's_{j}=\label{eq:restriction_lemma}\\
 & =\sum_{i\in I}\varepsilon_{x,r_{i}}w_{xr_{i}}\sum_{a,b}\mathrm{sign}(b)ab\sum_{a',b',j}\mathrm{sign}(b's_{j})a'b's_{j}\nonumber 
\end{align}
Denote the terms
\[
c_{\lambda,H}=\sum_{a,b}\mathrm{sign}(b)ab\quad\textrm{and}\quad s_{0}=\sum_{a',b',j}\mathrm{sign}(b's_{j})a'b's_{j}.
\]
Clearly, if we denote by $I'$ the set of $i\in I$ such that
$w_{xr_{i}}c_{\lambda,H}\neq0$, 
then Eq. \ref{eq:restriction_lemma} holds with summation index $i\in I'$
as well.

For each $i\in I'$ we may choose $a_{i}\in R_{\lambda}(H^{c})$ and
$b_{i}\in C_{\lambda}(H^{c})$ corresponding to $r=r_{i}$ as in our
assumptions. Note that for these we have
\begin{equation}
s_{0}=\delta a_{i}b_{i}s_{0}\label{eq:s_0_consumes}
\end{equation}
for any $i\in I'$, using assumption (4). Note also that
\begin{equation}
c_{\lambda,H}a_{i}b_{i}=a_{i}b_{i}c_{\lambda,H}\label{eq:c_lambda_centralizing}
\end{equation}
 as $\mathrm{Sym}(H)$ centralizes $\mathrm{Sym}(H^{c})$. Recall
that condition (2) in the statement implies $w_{t_{H}(xr)}a_{r}b_{r}=w_{t_{H}(xr)a_{r}b_{r}}$
by Lemma \ref{lem:proper_swap}. Therefore we obtain
\begin{align*}
w_{x}c_{\lambda} & \overset{\textrm{Eq. }\ref{eq:restriction_lemma}}{=}\sum_{i\in I'}\varepsilon_{x,r_{i}}w_{xr_{i}}c_{\lambda,H}s_{0}\overset{\textrm{Cond. (1)}}{=}\sum_{i\in I'}\varepsilon_{x,r_{i}}\rho\,w_{t_{H}(xr_{i})}c_{\lambda,H}s_{0}=\\
 & \overset{\textrm{Eq. }\ref{eq:s_0_consumes}}{=}\sum_{i\in I'}\varepsilon_{x,r_{i}}\rho\,w_{t_{H}(xr_{i})}c_{\lambda,H}\delta a_{i}b_{i}s_{0}\overset{\textrm{Eq. }\ref{eq:c_lambda_centralizing}}{=}(\rho\delta)\sum_{i\in I'}\varepsilon_{x,r_{i}}w_{t_{H}(xr_{i})}a_{i}b_{i}c_{\lambda,H}s_{0}\\
 & \overset{\textrm{Lemma }\ref{lem:epsilon_basic_props}/1.}{=}(\rho\delta)\sum_{i\in I'}\varepsilon_{t\circ x,r_{i}}w_{t_{H}(xr_{i})}a_{i}b_{i}c_{\lambda,H}s_{0}\\
 & \overset{\textrm{Cond. (2)}}{=}(\rho\delta)\sum_{i\in I'}\varepsilon_{t\circ x,r_{i}}w_{t_{H}(xr_{i})a_{i}b_{i}}c_{\lambda,H}s_{0}\overset{\textrm{Cond. (3)}}{=}(\rho\delta)\sum_{i\in I'}\varepsilon_{t\circ x,r_{i}}w_{t\circ xr_{i}}c_{\lambda,H}s_{0}
\end{align*}
Finally, note that if $w_{xr_{i}}c_{\lambda,H}=0$ then $w_{t\circ xr_{i}}c_{\lambda,H}=0$
holds as well by 
Cor. \ref{cor:T iso} and \ref{lem:epsilon_basic_props}/1.
Hence we
may apply Eq. \ref{eq:restriction_lemma} for $t\circ x$, that gives 
\[
w_{t\circ x}c_{\lambda}=\sum_{i\in I'}\varepsilon_{t\circ x,r_{i}}w_{t\circ xr_{i}}c_{\lambda,H}s_{0}.
\]
where $I'$ is still defined as above. The claim follows.
\end{proof}

As assumption $(1)$ of Lemma \ref{lem:wx_vs_wxH} is non-trivial to prove, we claim the following:

\begin{lem}\label{start_wx_vs_wxH}
Let $\lambda\vdash n$, $x\in X_{k,l}$ a coloring of $[n]$ and $H\subseteq [n]$ of size $m$ that is compatible with $\lambda$.
Denote by $\lambda'$ the partition corresponding to $H$, and by $E$ the unique monotonically increasing $[m]\to H$ function.
Define the coloring 
\[x'=\left(x|_H \circ E\right) :[m]\to\{0,1,2,3\}.\]
Assume that for each $h_{1},h_{2}\in H$ such that $\{x(h_{1}), x(h_{2})\} = \{1, 2\}$  we have
\begin{equation} 
|\{\nu\in H^{c}\ |\ h_{1}<\nu<h_{2},\ x(\nu)=1\}|=|\{\nu\in H^{c}\ |\ h_{1}<\nu<h_{2},\ x(\nu)=2\}|\label{eq:in_the_claim2}.
\end{equation}
Then
\begin{equation}
w_{x'}c_{\lambda'}=\rho\,w_{t\circ x'}c_{\lambda'} \Longrightarrow w_{x}c_{\lambda,H}=\rho\,w_{t_{H}(x)}c_{\lambda,H} \label{eq:wz2}.
\end{equation}
\end{lem}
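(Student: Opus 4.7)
The plan is to reduce the claim to the assumed equation on the subdiagram by factoring the ambient tensor space along the splitting $U=U_H\oplus U_{H^c}$, where $U_H=\mathrm{span}\{u_\nu:\nu\in H\}$ and $U_{H^c}=\mathrm{span}\{u_\nu:\nu\in H^c\}$. Compatibility of $H$ with $\lambda$ ensures that the order-preserving bijection $E:[m]\to H$ carries $R_{\lambda'}$ onto $R_\lambda(H)$ and $C_{\lambda'}$ onto $C_\lambda(H)$, so $c_{\lambda'}$ corresponds to $c_{\lambda,H}$ under the embedding $\mathbb{F}[\mathfrak{S}_m]\hookrightarrow\mathbb{F}[\mathrm{Sym}(H)]\subseteq\mathbb{F}\mathfrak{S}_n$ induced by $E$. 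Since $c_{\lambda,H}$ moves only indices in $H$, its action on $w_x$ should factor as an untouched $H^c$-block tensored with an $H$-block that, through $E$, matches $w_{x'}c_{\lambda'}$.

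First I would expand
\[
w_x\;=\;\epsilon_x\cdot\bigl(u_{I_H}\otimes u_{J_H}\bigr)\otimes\bigl(u_{I_{H^c}}\otimes u_{J_{H^c}}\bigr)
\]
inside $\bigoplus_{i+i'=k,\,j+j'=l}(\Lambda^iU_H\otimes\Lambda^jU_H)\otimes(\Lambda^{i'}U_{H^c}\otimes\Lambda^{j'}U_{H^c})$, where $I=x^{-1}(\{1,3\})$, $J=x^{-1}(\{2,3\})$, the subscripts $H$ and $H^c$ denote intersections, and $\epsilon_x\in\{\pm1\}$ is the shuffle sign that extracts the $H$-indexed wedge factors from each tensor slot. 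Via $E$ the first tensor factor is $w_{x'}$, and $c_{\lambda,H}$ acts there as $c_{\lambda'}$ and as the identity on the second factor, so
\[
w_xc_{\lambda,H}\;=\;\epsilon_x\cdot(w_{x'}c_{\lambda'})\otimes(u_{I_{H^c}}\otimes u_{J_{H^c}}),
\]
and analogously $w_{t_H(x)}c_{\lambda,H}=\epsilon_{t_H(x)}\cdot(w_{t\circ x'}c_{\lambda'})\otimes(u_{I_{H^c}}\otimes u_{J_{H^c}})$, since $t_H(x)$ matches $x$ on $H^c$ and corresponds to $t\circ x'$ on $H$.

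The crux, and main obstacle, is to show $\epsilon_x=\epsilon_{t_H(x)}$; granted this, the assumption $w_{x'}c_{\lambda'}=\rho\,w_{t\circ x'}c_{\lambda'}$ immediately yields the claim. A direct inversion-count computation of the two shuffle signs shows that all color-$3$ contributions cancel and that the exponent difference reduces modulo $2$ to
\[
\sum_{h\in H\cap x^{-1}(\{1\})}f(h)\;-\sum_{h\in H\cap x^{-1}(\{2\})}f(h),\quad f(h):=\bigl|H^c\cap[1,h)\cap x^{-1}(\{1\})\bigr|-\bigl|H^c\cap[1,h)\cap x^{-1}(\{2\})\bigr|.
\]
Applying the balance hypothesis (\ref{eq:in_the_claim2}) to an arbitrary pair $h_1\in H\cap x^{-1}(\{1\})$, $h_2\in H\cap x^{-1}(\{2\})$ yields $f(h_1)=f(h_2)$, so $f$ is a constant $C$ on $H\cap x^{-1}(\{1,2\})$; the signed sum then equals $C\bigl(|H\cap x^{-1}(\{1\})|-|H\cap x^{-1}(\{2\})|\bigr)$. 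When this count-difference vanishes, the parity is zero and $\epsilon_x=\epsilon_{t_H(x)}$. Otherwise $w_{x'}$ and $w_{t\circ x'}$ lie in distinct bigraded summands of $\bigoplus_{a,b}\Lambda^aU'\otimes\Lambda^bU'$, so the hypothesized equation forces each of $w_{x'}c_{\lambda'}$ and $w_{t\circ x'}c_{\lambda'}$ to vanish, and the target equation holds trivially by the two display formulas above. Thus hypothesis (\ref{eq:in_the_claim2}) is precisely the combinatorial condition tailored to make the shuffle-sign cancellation work.
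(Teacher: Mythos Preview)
Your argument is correct and takes a genuinely different route from the paper. The paper embeds $\mathfrak{S}_m\hookrightarrow\mathfrak{S}_n$ via $E$, builds an $\mathfrak{S}_m$-equivariant map $F:\Lambda^{k'}U'\otimes\Lambda^{l'}U'\to\Lambda^kU\otimes\Lambda^lU$ sending $w_{x'}\mapsto w_x$, and then shows $F(w_{t\circ x'})=+\,w_{t_H(x)}$ by exhibiting an explicit proper swap $s\in\mathfrak{S}_m$ with $t\circ x'=x's$ and checking that its image $f_H(s)$ is again a proper swap with respect to $x$; condition~(\ref{eq:in_the_claim2}) is invoked only to verify the third clause in the definition of proper swap for $f_H(s)$, so the sign is handled by appealing to Lemma~\ref{lem:proper_swap}. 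You instead decompose $U=U_H\oplus U_{H^c}$, factor $w_x$ into an $H$-block times an $H^c$-block with a shuffle sign $\epsilon_x$, and reduce everything to the elementary parity identity $\epsilon_x=\epsilon_{t_H(x)}$, which you obtain by a direct inversion count. Both arguments boil down to the same combinatorial parity, but your bookkeeping is more self-contained: it bypasses the proper-swap machinery entirely and makes transparent that (\ref{eq:in_the_claim2}) is exactly the condition forcing the $H^c$-induced shuffle signs to agree. The paper's version, on the other hand, reuses Lemma~\ref{lem:proper_swap} and thereby keeps the sign analysis uniform with the rest of Section~\ref{sec:even_tail}. Your handling of the degenerate case $|H\cap x^{-1}(1)|\neq|H\cap x^{-1}(2)|$ via the bigrading is also cleaner than the paper's somewhat terse remark that $w_{t_H(x)}\notin\Lambda^kU\otimes\Lambda^lU$.
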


\begin{proof}
Denote by $f_H$ the composition of the group homomorphisms
\[\mathfrak{S}_m\overset{\cong}{\longrightarrow} \mathrm{Sym}(H) \longrightarrow \mathfrak{S}_n\]
where the first is induced by $E$ and the second by the inclusion $H\hookrightarrow [n]$.
The map $f_H$ induces an $\mathfrak{S}_m$-module structure on $\Lambda^{k}U\otimes\Lambda^{l}U$, denote this module structure as $v * r = v f_H(r)$ for any $r\in \mathfrak{S}_m$.
This definition assures that 
\begin{equation}
\label{connect}
v * c_{\lambda'} = v c_{\lambda,H}.     
\end{equation}
for any $v\in \Lambda^k U\otimes\Lambda^l U$.

Denote by $U'$ the representation $U$ but for $\mathfrak{S}_m$ instead of $\mathfrak{S}_n$, and assume that $w_{x'}\in \Lambda^{k'}U'\otimes \Lambda^{l'}U'$. Then we may consider the map of $\mathfrak{S}_m$-modules 
\[
F:\Lambda^{k'}U'\otimes \Lambda^{l'}U' \longrightarrow \Lambda^k U \otimes \Lambda^l U
\qquad w_{x'}r \mapsto w_x * r
\]
that can be checked to be a well-defined $\mathfrak{S}_m$-homomorphism, using that $w_{x'}$ generates $\Lambda^{k'}U'\otimes \Lambda^{l'}U'$.
 
First assume that $k'\neq l'$. In this case 
$w_{t_H(x)} \notin \Lambda^k U \otimes \Lambda^l U$ so both sides of the claim are zero.

Assume that $k'=l'$. By the definition of $F$ it is clear that $F(w_{t\circ x'})$ is either $w_{t_{H}(x)}$ or $-w_{t_{H}(x)}$. 
We prove that it is always the former. (Without Condition \ref{eq:in_the_claim2} the negative case could also happen, see Example \ref{exa:negative_restricted}.) This is sufficient, as then
\[
w_x c_{\lambda,H} = w_x * c_{\lambda'} 
= F(w_{x'} c_{\lambda'})
\overset{\ref{eq:wz2}}{=} F(\rho w_{t \circ x'} c_{\lambda'})=
\]
\[
= \rho F(w_{t \circ x'}) * c_{\lambda'}
= \rho w_{t_{H}(x)} * c_{\lambda'}
= \rho w_{t_{H}(x)} c_{\lambda,H}
\]
so the statement will follow in this case.

To show that $F(w_{t \circ x'})=w_{t_H(x)}$, note that there exists a proper swap $s$ with respect to $x'$ that satisfies $t\circ x' = x's$ (simply swap the first $1$ with the first $2$, the second $1$ with the second $2$, etc.).
By Lemma \ref{lem:proper_swap}, $w_{t\circ x'} = w_{x's} = w_{x'}s$.
Applying $F$ gives $F(w_{t\circ x'}) = F(w_{x'}s) = w_x*s = w_x f_H(s)$. Notice that $f_H(s)$ is a proper swap with respect to $x$. Indeed, $f_H(s)$ fulfills the first two
conditions of proper swaps by the inheritance of $s$, and fulfills the third condition of proper swaps by Condition \ref{eq:in_the_claim2} of the present lemma. 
Applying Lemma \ref{lem:proper_swap} on $f_H(s)$ gives $w_x f_H(s) = w_{x f_H(s)} = w_{t_H (x)}$, and this is exactly our claim. 
\end{proof}

\begin{exa}\label{exa:negative_restricted}
Without Condition \ref{eq:in_the_claim2}, it may happen that $F(w_{t \circ x'})=-w_{t_H(x)}$. Define $\lambda$ and $x$ as
\[
\Yvcentermath1x=\young(10,01,2,2),
\]
and let $H =\{1, 3, 5\}$. Then, $w_x = u_1\wedge u_4 \otimes u_5\wedge u_6$, $w_{t_H(x)} = u_4\wedge u_5 \otimes u_1\wedge u_6$ and
\begin{align*}
F(w_{t \circ x'}) &= F(w_{x'}(13)) = w_x * (13) = w_x(15) = u_1\wedge u_4 \otimes u_5\wedge u_6(15) \\
                  &= u_5\wedge u_4 \otimes u_1\wedge u_6 = -u_4\wedge u_5 \otimes u_1\wedge u_6.
\end{align*} 
hence Condition \ref{eq:in_the_claim2} is indeed required.
\end{exa}

We will also need an $H$-restricted version of Lemma \ref{lem:two_same}/3 in the next subsection.
\begin{lem}
\label{lem:H-restricted_five03}Let $\lambda\vdash n$, $x\in X$
and $H\subseteq[n]$ a subset of the first two columns of $\lambda$
that is compatible with $\lambda$. If there are at least five elements
of $H$ that are of color $0$ or $3$, then $w_{x}c_{\lambda,H}=0$.
\end{lem}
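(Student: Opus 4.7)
The plan is to mimic the proof of Lemma \ref{lem:two_same}/3, but working inside the restricted symmetrizer $c_{\lambda,H}$ instead of $c_\lambda$. First I would establish an $H$-restricted analogue of Lemma \ref{lem:two_same}/2: if some $y\in X$ admits two indices $i,j\in H$ lying in the same column of $\lambda$ with $y(i)=y(j)\in\{0,3\}$, then
\[
w_y \sum_{b\in C_\lambda(H)}\mathrm{sign}(b)\,b = 0.
\]
The proof is verbatim the same as for Lemma \ref{lem:two_same}/2: the transposition $b_0=(i,j)$ lies in $C_\lambda(H)$ since both endpoints are in $H$, we have $\sum_b \mathrm{sign}(b)b = -b_0\sum_b\mathrm{sign}(b)b$, and by Lemma \ref{lem:epsilon_basic_props}/3 one has $w_y b_0=w_y$.

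Next I would expand
\[
w_x c_{\lambda,H} \;=\; \sum_{a\in R_\lambda(H)} \varepsilon_{x,a}\, w_{xa}\, \sum_{b\in C_\lambda(H)}\mathrm{sign}(b)\,b,
\]
so it is enough to show that each summand vanishes, i.e. that for every $a\in R_\lambda(H)$ the coloring $xa$ has two cells of $H$ in a common column of $\lambda$ with equal color in $\{0,3\}$.

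The last step is a double pigeonhole argument. Since every element of $R_\lambda(H)$ fixes $H^c$ pointwise and only permutes indices within $H$, the number of positions of $H$ whose $xa$-color lies in $\{0,3\}$ equals the corresponding count for $x$, namely $\geq 5$. Because $H$ is contained in the first two columns of $\lambda$, these $\geq 5$ cells sit in at most two columns, so some column of $\lambda$ contains $\geq \lceil 5/2\rceil = 3$ of them; a second pigeonhole against the $2$-element palette $\{0,3\}$ then produces two cells in that column with the same color. Applying the restricted version of Lemma \ref{lem:two_same}/2 from the first paragraph annihilates $w_{xa}\sum_{b\in C_\lambda(H)}\mathrm{sign}(b)\,b$, and the sum is zero.

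There is essentially no obstacle: the only thing to be careful about is that the witnessing transposition $(i,j)$ actually belongs to the \emph{restricted} column group $C_\lambda(H)$, which is automatic from the choice of $i,j\in H$. The hypothesis that $H$ is compatible with $\lambda$ plays no role beyond ensuring that $C_\lambda(H)$ and $R_\lambda(H)$ are the expected subgroups, and the hypothesis that $H$ sits in the first two columns is exactly what makes the pigeonhole work with the threshold $5$.
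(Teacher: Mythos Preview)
Your proposal is correct and essentially identical to the paper's own argument: both reduce to showing that each term $w_{xa}\sum_{b\in C_\lambda(H)}\mathrm{sign}(b)b$ vanishes by finding a transposition $(i,j)\in C_\lambda(H)$ with $xa(i)=xa(j)\in\{0,3\}$ and applying Lemma~\ref{lem:epsilon_basic_props}/3, noting that the hypothesis on the number of $0$'s and $3$'s in $H$ is preserved under $x\mapsto xa$ for $a\in R_\lambda(H)$. Your version just spells out the double pigeonhole (five cells in two columns $\Rightarrow$ three in one column $\Rightarrow$ two of the same color) more explicitly than the paper, which simply asserts that such a transposition ``exists by the assumptions.''
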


\begin{proof}
Denote $b_{0}=\sum_{b\in C_{\lambda}(H)}\mathrm{sign}(b)b$. It is
enough to show that $w_{x}b_{0}=0$. Indeed, then
\[
w_{x}c_{\lambda,H}=w_{x}\sum_{a\in R_{\lambda}(H)}ab_{0}=\sum_{a\in R_{\lambda}}\varepsilon_{x,a}w_{xa}b_{0}=0
\]
as we may apply $w_{x}b_{0}=0$ for $xa$ instead of $x$ as the assumptions
of the lemma are invariant under $x\mapsto xa$ by $a\in R_{\lambda}(H)$.

To prove $w_{x}b_{0}=0$, take a transposition $s=(i,j)\in C_{\lambda}(H)$
such that $x(i)=x(j)\in\{0,3\}$ (that exists by the assumptions).
Then $xs=x$ and by Lemma \ref{lem:epsilon_basic_props}/3, 
\[
w_{x}b_{0}=w_{x}(-sb_{0})=-\varepsilon_{x,s}w_{xs}b_{0}=-w_{x}b_{0}
\]
hence $w_{x}b_{0}=0$, as we claimed.
\end{proof}

\subsection{Inductive step}
Now that all the necessary technical machinery is available, we can take a direct step toward the proof of Prop. \ref{prop:sym_even}.
In this subsection we apply Lemma \ref{lem:wx_vs_wxH} (Induction Lemma) on the double hook Young diagrams with no $1$'s or $2$'s in the first row. 

\begin{prop}
\label{prop:first_row_uninteresting}Let $\lambda\vdash n$ be a Young
diagram of the form $\lambda=(q,p,2^{d_{2}},1^{d_{1}})$ for some
$q\geq p\geq2$, $d_{1},d_{2}\geq0$. Let $x\in X_{k,k}$ for some $k$
such that $x(\{1,2,\dots,q\})\subseteq\{0,3\}$. If $d_{1}$ is even,
then 
\begin{equation}
w_{x}c_{\lambda}=(-1)^{\frac{d_{1}}{2}}w_{t\circ x}c_{\lambda}\label{eq:antisymmetry_with_d1}
\end{equation}
holds.
\end{prop}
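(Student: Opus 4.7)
My plan is to proceed by strong induction on $n$, with base cases handled by Lemma \ref{lem:initial} (and by direct computation for smaller shapes), and with inductive steps powered by the Induction Lemma (Lemma \ref{lem:wx_vs_wxH}). The overarching idea is: given the shape $\lambda$, choose a compatible subset $H \subseteq [n]$ such that the sub-partition $\lambda'$ determined by $H$ is again a double hook with even tail; apply the hypothesis to $\lambda'$, translate it into a statement about the restricted symmetrizer $c_{\lambda, H}$ via Lemma \ref{start_wx_vs_wxH}, and finally lift it to $c_{\lambda}$ via Lemma \ref{lem:wx_vs_wxH}.

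I would split the inductive step into three cases. \emph{Case I} covers reductions that keep $d_1$ unchanged: if $q > p$, set $H = [n] \setminus \{q\}$ (strip the last cell of row $1$); if $q = p \geq 3$, strip the last cell of row $2$; if $q = p = 2$, $d_1 = 0$, $d_2 \geq 1$, strip the final $2$-row entirely. In each subcase $\lambda'$ has the same $d_1$, and one can take $a_r = b_r = e$ (or, in the last subcase, the row transposition on the removed $2$-row when its cells carry a $1$--$2$ pair), giving $\delta = 1$; the parity condition~(\ref{eq:in_the_claim2}) of Lemma \ref{start_wx_vs_wxH} is automatic since no extra $1$'s or $2$'s lie outside $H$ in a way that needs matching. \emph{Case II} reduces $d_1$ by $2$ when $d_1 \geq 2$ and no Case-I reduction applies: take $H$ stripping the last two tail cells, which both sit in column $1$, and let $b_r = (n-1, n) \in C_{\lambda}(H^c)$, so $\mathrm{sign}(b_r) = -1$ and $\delta = -1$. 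The inductive hypothesis supplies $\rho = (-1)^{(d_1 - 2)/2}$, and $\rho \delta = (-1)^{d_1/2}$ as required. \emph{Case III} collects the remaining small shapes $(2, 2)$, $(2, 2, 2)$, $(2, 2, 1, 1)$: the last two are exactly Lemma \ref{lem:initial} and $(2, 2)$ is a direct check on the few admissible colorings.

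The principal obstacle will be uniformly verifying the four conditions of Lemma \ref{lem:wx_vs_wxH} over every $r \in R_{\lambda}$. Row permutations in $R_{\lambda}$ may rearrange $1$'s and $2$'s among the rows of $\lambda$, so the colors appearing at the removed cells in $xr$ depend on $r$: for some $r$ a proper-swap completion $a_r b_r$ with $t_H(xr) a_r b_r = t \circ xr$ (in the sense of Lemma \ref{lem:proper_swap}) will exist, while for others one must instead show $w_{xr} c_{\lambda, H} = 0$ so that Condition (1) of the Induction Lemma becomes vacuous. The vanishing cases will be handled by Lemma \ref{lem:two_same} (two equal non-zero colors in a row or column) and Lemma \ref{lem:H-restricted_five03} (overloading the first two columns of $H$ with $0$'s and $3$'s). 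Finally, the parity requirement~(\ref{eq:in_the_claim2}) of Lemma \ref{start_wx_vs_wxH}, asking for equal counts of $1$'s and $2$'s between paired cells inside $H^c$, should follow from $x \in X_{k, k}$ combined with the confinement $x^{-1}(\{1, 2\}) \cap [q] = \emptyset$.
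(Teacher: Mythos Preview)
Your induction-on-$n$ strategy is different from the paper's, and as written it has a genuine gap.

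\textbf{The gap.} In Case~I (subcases 2 and 3) and in Case~II you strip cells that are \emph{not} in the first row, so nothing in the hypothesis forces their colour to lie in $\{0,3\}$. Suppose, for instance, $q=p\ge 3$ and you strip the last cell of row~2, so $H^{c}=\{2q\}$; then $R_{\lambda}(H^{c})=C_{\lambda}(H^{c})=\{e\}$. If $(xr)(2q)=1$, condition~(3) of Lemma~\ref{lem:wx_vs_wxH} asks for $a_{r}b_{r}$ sending the colour $1$ at position $2q$ to $2$, which is impossible by a permutation of a single cell; so you must show $w_{xr}c_{\lambda,H}=0$. But Lemma~\ref{lem:two_same} and Lemma~\ref{lem:H-restricted_five03} do not give this: a lone $1$ in row~2 neither produces two equal colours in a row/column of $H$ nor forces five $0/3$'s in the first two columns of $H$. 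Concretely, take $\lambda=(3,3)$, $x=(0,3,0,1,2,3)$ and $r=(4\,6)$: then $(xr)'=(0,3,0,3,2)\in X_{2,3}$ with $\lambda'=(3,2)$, and $w_{(xr)'}c_{\lambda'}\neq 0$, so $w_{xr}c_{\lambda,H}\neq 0$ and the Induction Lemma cannot be invoked. The same phenomenon occurs in Case~II whenever the last two tail cells carry colours such as $\{0,1\}$ or $\{3,2\}$: the restricted colouring lands in $X_{k-1,k}$, the inductive hypothesis (stated only for $X_{k',k'}$) is unavailable, and the proposed $b_{r}=(n{-}1,n)$ does not realise $t_{H}(xr)\mapsto t\circ xr$.

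\textbf{What the paper does instead.} The paper does \emph{not} induct on $n$. It first observes (via Lemma~\ref{lem:two_same}/3 together with the first-row hypothesis) that $x$ has at most two ``unpaired'' cells --- cells of colour $1$ or $2$ that are alone in a row of length $\ge 2$. It then chooses $H$ of size~$6$ adaptively so that \emph{all} unpaired cells lie inside $H$: in Case~I (two unpaired cells) $H$ is $\{1,2\}$ together with the two rows containing the unpaired cells, giving $\lambda'=(2,2,2)$; in Case~II (one unpaired cell) $H$ additionally picks up a matching tail cell, giving $\lambda'=(2,2,1,1)$; Case~III (no unpaired cells) is handled directly by Lemma~\ref{lem:simplified_lemma}. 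With this choice, outside $H$ every $1$ is paired with a $2$ in its row or is on the tail with a matching $2$, so $a_{r}b_{r}$ can always be taken as the product of those pairings, and the parity condition~(\ref{eq:in_the_claim2}) holds automatically because unpaired colours never sit in $H^{c}$. The base cases then reduce exactly to Lemma~\ref{lem:initial}. Your scheme could only be rescued by proving a substantially stronger vanishing statement for $w_{y}c_{\lambda'}$ with $y\notin X_{k',k'}$, which is essentially as hard as the proposition itself.
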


\begin{proof}
We may assume that $w_{x}c_{\lambda}\neq0$. Indeed, otherwise $w_{t\circ x}c_{\lambda}=0$
also holds by Cor. \ref{cor:T iso}.
We may also
assume that there is at most one cell of color $1$ and at most one
cell of color $2$ in each row in $\lambda$, for the given coloring
$x$, by Lemma \ref{lem:two_same}/1. 

Let us call $m\in[n]$ \emph{unpaired} if it is the only element in
its row of color $1$ or $2$, and the row is of length at least two.
Our tactic will be to choose an $H\subseteq[n]$ that covers as many
unpaired elements as possible, and satisfies the assumptions of Lemma
\ref{lem:wx_vs_wxH} (Induction Lemma). 

There are at most two unpaired elements in total, by Lemma \ref{lem:two_same}/3
and our assumption on the first row, so we may distinguish three cases
based on the number of unpaired elements.

\vspace{0.2cm}
\textbf{Case I:} Assume that there are two unpaired elements. For
example,
\[
\young(00033,130,12,23).
\]
 Let their rows be the $m$-th and the $m'$-th row of $\lambda$.
Then we may define $H=\{1,2,m_{1},m_{2},m_{1}',m_{2}'\}$ where $m_{1}$
and $m_{2}$ denotes the first and second elements of the $m$-th
row, and similarly for $m'$. 

Let us check the assumptions of Lemma \ref{lem:wx_vs_wxH}. Let $r\in R_{\lambda}$
and assume that $w_{xr}c_{\lambda,H}\neq0$. If $H$ contains at least
five elements of color $0$ or $3$ in $xr$, then we
would have $w_{xr}c_{\lambda,H}=0$ by Lemma \ref{lem:H-restricted_five03}.
Consequently, all unpaired elements of $xr$ are contained in $H$.
Observe that for each $h_{1},h_{2}\in H\subseteq[n]$ we have
\begin{equation*}
|\{\nu\in H^{c}\ |\ h_{1}<\nu<h_{2},\ (xr)(\nu)=1\}|=|\{\nu\in H^{c}\ |\ h_{1}<\nu<h_{2},\ (xr)(\nu)=2\}|.
\end{equation*}
Indeed, all the unpaired elements are contained in $H$ so every other
row between the elements of $H$ either doesn't contain any $1$'s or $2$'s, or it does contain
one of each.
This equation and Lemma \ref{eq:initial222}/1 assures that Lemma \ref{start_wx_vs_wxH} can be applied
to the coloring $xr$ and the given choice of $H$. The lemma gives $w_{xr}c_{\lambda,H}=w_{t_{H}(xr)}c_{\lambda,H}$, hence condition $(1)$ of Lemma \ref{lem:wx_vs_wxH} is verified with $\rho = 1$.

Let us check the remaining assumptions of Lemma \ref{lem:wx_vs_wxH}.
If there are different number of $1$'s as $2$'s in $H$ then $w_{z}c_{\lambda'}$
would be zero by Lemma \ref{lem:initial}/1, hence $w_{xr}c_{\lambda,H}=0$
too, by Lemma \ref{start_wx_vs_wxH}. As we assumed this is not the case, we may define
$a_{r}\in R_{\lambda}(H^{c})$ as the product of disjoint transpositions
swapping the elements of color $1$ and $2$ that are in the same
row, and similarly $b_{r}\in C_{\lambda}(H^{c})$ as the product of
disjoint transpositions swapping the $1$'s and $2$'s on the tail
of $\lambda$. The permutation $b_{r}$ has to be chosen in a monotonic
way so $a_{r}b_{r}$ is a proper swap. Then it is straightforward
to check the assumptions $(2)$, $(3)$ and $(4)$ of Lemma \ref{lem:wx_vs_wxH}
with $\delta=\mathrm{sign}(b_{r})=(-1)^{\frac{d_{1}}{2}}$ by Lemma \ref{lem:two_same}/3. The proof of this case follows from Lemma \ref{lem:wx_vs_wxH} (Induction Lemma). 

\vspace{0.2cm}
\textbf{Case II:} Assume that there is exactly one unpaired element
and it is of color $1$ (the case of color $2$ is analoguous). For
example,
\[
\young(00033,133,1,2,2,3).
\]
Let the row of the unpaired element be the $m$-th row of $\lambda$.
Then by Lemma \ref{lem:two_same}/3 and that $d_{1}$ is even, there
is exactly one element $l$ on the tail (i.e. $l>n-d_{1}$) that is
of color $0$ or $3$. 
Moreover, as the number of $1$'s and $2$'s
in $x$ agree (i.e. $x\in X_{k,k}$ for some $k$), there is one more element of color $2$ on the tail,
than of color $1$. 

Define $j$ as the least element such that it is on the tail (i.e.
$n-d_{1}<j\leq n$), is of color $2$ and there are the same number
of elements of color $1$ as of color $2$ strictly between $n-d_{1}$
and $j$. Then we may define $H=\{1,2,m_{1},m_{2},j,l\}$ where $m_{1}$
and $m_{2}$ denotes the first and second elements of the $m$-th
row.

Let us repeat the argument of the previous case. Let $r\in R_{\lambda}$
and assume that $w_{xr}c_{\lambda,H}\neq0$. Then $H$ contains at
most four elements of color $0$ or $3$ by the same argument using
Lemma \ref{lem:H-restricted_five03}. In particular, the unpaired
element in row $m$ is contained in $H$. The definition of $j$ assures that 
for each $h_{1},h_{2}\in H\subseteq[n]$ such that $\{x(h_{1}), x(h_{2})\} = \{1, 2\}$ we have
\begin{equation*}
|\{\nu\in H^{c}\ |\ h_{1}<\nu<h_{2},\ (xr)(\nu)=1\}|=|\{\nu\in H^{c}\ |\ h_{1}<\nu<h_{2},\ (xr)(\nu)=2\}|.
\end{equation*}
Just as in the previous case, this equation and Lemma \ref{eq:initial222}/2 assures that Lemma \ref{start_wx_vs_wxH} can be applied
to coloring $xr$ and restriction $H$. It gives $w_{xr}c_{\lambda,H}=-w_{t_{H}(xr)}c_{\lambda,H}$, hence condition $(1)$ of Lemma
\ref{lem:wx_vs_wxH} (Induction Lemma) is verified with $\rho = -1$.

Define $a_{r}\in R_{\lambda}(H^{c})$ (resp. $b_{r}\in C_{\lambda}(H^{c})$)
exactly the same way as in the previous case, in particular $b_{r}$
is defined in a monotonic way. Then it is straightforward to check
the assumptions $(2)$, $(3)$ and $(4)$ of Lemma \ref{lem:wx_vs_wxH}
with $\delta=\mathrm{sign}(b_{r})=(-1)^{\frac{d_{1}-2}{2}}$, hence
$\rho\delta=(-1)^{\frac{d_{1}}{2}}$. The proof of this case follows
analogously. 
\vspace{0.2cm}

\textbf{Case III:} Assume that there are no unpaired elements, e.g.
\[
\young(00033,12,1,2).
\]
Then we may simply define $a_{0}\in R_{\lambda}$ (resp. $b_{0}\in C_{\lambda}$)
as the product of disjoint transpositions swapping the elements of
color $1$ and $2$ that are in the same row (resp. on the tail of
$\lambda$), where $b_{0}$ (in fact both) are chosen in an order-preserving
way. Then the claim follows by Lemma \ref{lem:simplified_lemma}.
\end{proof}

\pagebreak

\subsection{Proof of Proposition \ref{prop:sym_even}} 

We show that it was enough to prove for the case of no $1$'s or $2$'s
in the first row, that is done in Prop. \ref{prop:first_row_uninteresting}.

For a given $\lambda\vdash n$ Young diagram and $x\in X$ coloring,
denote by $\ell(x)$ the number of $1$'s and $2$'s in total appearing in the first row i.e.
$\ell(x)=|x^{-1}(\{1,2\})\cap[\lambda_{1}]|$,
and let 
\[
W_{\ell(x)}=\mathrm{Span}(w_{y}\ |\ y\in X_{k,k},\ \ell(y)<\ell(x))\leq(\Lambda^{k}U)^{\otimes2}. 
\]
The plan is to prove Prop. \ref{prop:sym_even} for $x$ by induction on $\ell(x)$. 

Consider the natural projection 
\[U\to U\Big/\Big(\sum_{i=1}^{n}u_{i}\Big)\cong V.\]
It induces a projection $\Lambda^{k}U\to\Lambda^{k}V$ and also a
projection $(\Lambda^{k}U)^{\otimes2}\to(\Lambda^{k}V)^{\otimes2}.$
Denote the kernel of the latter by $K$. For the inductive step, we
prove the following:
\begin{lem}
\label{lem:enough_to_show_uninteresting_first_row}Let $\lambda\vdash n$
and $x\in X_{k,k}$. If $\ell(x)\geq1$ then
\[
w_{x}c_{\lambda}\in W_{\ell(x)}c_{\lambda}+K.
\]
\end{lem}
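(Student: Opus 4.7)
Since $\ell(x) \geq 1$, fix $j \in [\lambda_1]$ with $x(j) \in \{1,2\}$. Applying the isomorphism $T$ of Corollary \ref{cor:T iso} if necessary, I may assume $x(j) = 1$ (the case $x(j)=2$ is completely analogous). Write $w_x = u_I \otimes u_J$ where $I = x^{-1}(\{1,3\})$ and $J = x^{-1}(\{2,3\})$, and denote by $\alpha$ the position of $j$ in the increasing ordering of $I$.

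The plan is to exploit the element
\[
\eta = \Big(\sum_{i=1}^n u_i\Big) \wedge u_{I\setminus\{j\}} \otimes u_J \in K,
\]
which lies in the kernel $K$ since $\sum_i u_i$ spans the kernel of $U \twoheadrightarrow V$. Expanding $\eta = \sum_i u_i \wedge u_{I\setminus\{j\}} \otimes u_J$, the term $i=j$ contributes $(-1)^{\alpha-1} w_x$, terms with $i \in I\setminus\{j\}$ vanish by wedge repetition, and each $i \notin I$ contributes $(-1)^{\beta(i)} w_{y_i}$, where $\beta(i) = |(I\setminus\{j\}) \cap [1,i-1]|$ and $y_i$ is obtained from $x$ by setting $y_i(j) = 0$, while $y_i(i) = 1$ if $x(i) = 0$ or $y_i(i) = 3$ if $x(i) = 2$. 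Rearranging modulo $K$ gives
\[
w_x \equiv -\sum_{i \notin I} (-1)^{\alpha - 1 + \beta(i)}\, w_{y_i} \pmod K.
\]

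The next step is to classify the summands by $\ell(y_i)$. For $i \notin [\lambda_1]$, or $i \in [\lambda_1]$ with $x(i) = 2$, a direct check gives $\ell(y_i) < \ell(x)$, so those terms belong to $W_{\ell(x)}$. The only problematic indices are $i \in [\lambda_1]$ with $x(i) = 0$, for which $\ell(y_i) = \ell(x)$. For such $i$, the transposition $\tau_i = (i,j)$ lies in $\mathfrak{S}_{\lambda_1} \subseteq R_\lambda$ and satisfies $y_i = x\tau_i$, so by Eq.~\ref{eq:signs_def} combined with $\tau_i c_\lambda = c_\lambda$,
\[
w_{y_i} c_\lambda = \varepsilon_{x,\tau_i}\, w_x \tau_i c_\lambda = \varepsilon_{x,\tau_i}\, w_x c_\lambda.
\]

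The main technical obstacle is the parity computation showing that each problematic index contributes exactly $-1$ to the coefficient of $w_x c_\lambda$. Using Eq.~\ref{eq:signs_def_inversion}, a direct calculation yields $\varepsilon_{x,\tau_i} = (-1)^{M_i}$, where $M_i = |I \cap (\min(i,j),\max(i,j))|$. I then expect the identity
\[
\alpha - 1 + \beta(i) + M_i \equiv 0 \pmod 2
\]
to hold, which follows in both cases $i<j$ and $i>j$ from partitioning $I\cap[1,j-1]$ (respectively $I\cap[1,i-1]$) at the smaller of $i$ and $j$ and using $j\in I$, $i\notin I$. Consequently $(-1)^{\alpha-1+\beta(i)}\varepsilon_{x,\tau_i} = +1$ for every problematic $i$, so if $N$ denotes the number of such indices, applying $c_\lambda$ to the displayed congruence yields
\[
(1 + N)\, w_x c_\lambda \in W_{\ell(x)} c_\lambda + K.
\]
Since $N \geq 0$ and $\mathbb{F}$ has characteristic zero, the coefficient $1 + N$ is invertible in $\mathbb{F}$, which gives $w_x c_\lambda \in W_{\ell(x)} c_\lambda + K$.
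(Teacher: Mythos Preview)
Your proof is correct and follows essentially the same route as the paper's: both use the element $\big(\sum_i u_i\big)\wedge u_{I\setminus\{j\}}\otimes u_J\in K$, classify the resulting terms according to $\ell$, and handle the problematic first-row terms via the transposition $(i,j)\in R_\lambda$ together with $\tau_i c_\lambda=c_\lambda$. The only difference is cosmetic: the paper works throughout with the unsorted wedge $u_m\wedge u_I$ (so that $(i,m)$ visibly sends $u_i\wedge u_I\otimes u_J$ to $u_m\wedge u_I\otimes u_J$ with no sign), whereas you keep everything in terms of the sorted basis $w_y$ and carry out the parity check $\alpha-1+\beta(i)+M_i\equiv 0$ explicitly---both arrive at $(1+N)\,w_xc_\lambda\in W_{\ell(x)}c_\lambda+K$ with $N\geq 0$.
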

\noindent Note that the statement is interesting only for $\ell(x)\leq 2$ by Lemma \ref{lem:two_same}/1.

\begin{proof}
Let $m\in[\lambda_{1}]$ such that $x(m)=1$ (the case of $x(m)=2$
is analogous). Write 
\[
w_{x}=\varepsilon \cdot (u_{m}\wedge u_{I})\otimes u_{J} 
\]
where $\varepsilon\in \{1,-1\}$ depending on the position of $m$ and $m\notin I\cup J\subseteq[n]$ by $x(m)=1$. Consider the element:
\begin{equation}
K\ni\bigg( \sum_{i=1}^{n}\varepsilon\cdot u_{i}\wedge u_{I}\bigg)\otimes u_{J}=w_{x}+
\sum_{i\neq m}\varepsilon \cdot(u_{i}\wedge u_{I})\otimes u_{J}.\label{eq:element_in_K}
\end{equation}
We show the following:
\begin{claim}
For given $i\neq m$ the element $((u_{i}\wedge u_{I})\otimes u_{J})c_{\lambda}$
either equals $\varepsilon w_{x}c_{\lambda}$ or is contained in $W_{\ell(x)}c_{\lambda}$.
\end{claim}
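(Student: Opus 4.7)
The plan is to case-split on the position of $i$ relative to $I$, $J$, and the first row $[\lambda_{1}]$, and to read off how the coloring attached to $(u_{i}\wedge u_{I})\otimes u_{J}$ differs from $x$. Recall that $m\in[\lambda_{1}]$ with $x(m)=1$ and $m\notin I\cup J$. First, if $i\in I$ then $u_{i}\wedge u_{I}=0$ and the element lies trivially in $W_{\ell(x)}c_{\lambda}$.

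Next, if $i\in J\setminus I$, then $(u_{i}\wedge u_{I})\otimes u_{J}=\pm w_{y}$, where $y$ is obtained from $x$ by setting $y(m)=0$ (since $m$ lies in neither new index set) and $y(i)=3$ (since $i$ now belongs to both). As $m\in[\lambda_{1}]$ previously contributed a color~$1$ to the first row and now contributes a $0$, we get $\ell(y)\leq \ell(x)-1$, so $w_{y}\in W_{\ell(x)}$, as required.

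The remaining case is $i\notin I\cup J$, in which $(u_{i}\wedge u_{I})\otimes u_{J}=\pm w_{y}$ where $y$ satisfies $y(m)=0$ and $y(i)=1$. I would subdivide according to whether $i\in[\lambda_{1}]$. If $i\notin[\lambda_{1}]$, then the color~$1$ at $m$ has been replaced by a color~$1$ outside the first row, so $\ell(y)=\ell(x)-1$ and again $w_{y}\in W_{\ell(x)}$. If $i\in[\lambda_{1}]$, I would invoke the transposition $(i,m)\in R_{\lambda}$: since neither $i$ nor $m$ lies in $I\cup J$, the action of $(i,m)$ on the tensor factor-wise is a clean index-replacement, yielding the identity $w_{x}(i,m)=\varepsilon\,(u_{i}\wedge u_{I})\otimes u_{J}$. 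Then $((u_{i}\wedge u_{I})\otimes u_{J})c_{\lambda}=\varepsilon\,w_{x}(i,m)c_{\lambda}=\varepsilon\,w_{x}c_{\lambda}$, using $(i,m)c_{\lambda}=c_{\lambda}$.

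The only real point of care is in the final sub-case, where the sign in the identity $w_{x}(i,m)=\varepsilon\,(u_{i}\wedge u_{I})\otimes u_{J}$ must come out as exactly $\varepsilon$, not $-\varepsilon$; this is clean precisely because $i,m\notin I\cup J$, so $(i,m)$ simply swaps $u_{m}$ for $u_{i}$ in the first wedge and leaves $u_{J}$ unchanged, without introducing any additional reordering sign. All remaining manipulations are routine sign bookkeeping that the paper's $\varepsilon_{x,s}$ formalism handles automatically.
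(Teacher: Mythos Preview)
Your proof is correct and follows essentially the same approach as the paper's: the paper case-splits on the value $x(i)\in\{0,1,2,3\}$, which is equivalent to your split on membership of $i$ in $I$, $J\setminus I$, or $[n]\setminus(I\cup J)$, and in the final sub-case it uses the same transposition $(i,m)\in R_{\lambda}$ with the same sign check. Your bound $\ell(y)\leq\ell(x)-1$ in the $i\in J\setminus I$ case is in fact slightly more careful than the paper's stated $\ell(y)=\ell(x)-1$ (which can be $\ell(x)-2$ when $i\in[\lambda_{1}]$), but this makes no difference to the conclusion.
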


The lemma follows by the claim, because then Eq. \ref{eq:element_in_K}
gives \[zw_{x}c_{\lambda}\in W_{\ell(x)}c_{\lambda}+K\] for some positive
integer $z$.
\begin{proof}[Proof of the claim]
If $x(i)\in\{1,3\}$ then $u_{i}\wedge u_{I}=0$ so $
((u_{i}\wedge u_{I})\otimes u_{J})c_{\lambda}=0
\in W_{\ell(x)}c_{\lambda}$. 
If $x(i)=2$, then $(u_{i}\wedge u_{I})\otimes u_{J}=\pm w_{y}$ where
$y\in X_{k,k}$ is defined as 
\[
y(j)=\begin{cases}
0 & \textrm{if }j=m\\
3 & \textrm{if }j=i\\
x(j) & \textrm{otherwise}
\end{cases}
\]
Notice that $\ell(y)=\ell(x)-1$, in particular $(u_{i}\wedge u_{I})\otimes u_{J}\in W_{\ell(x)}$.

If $i>\lambda_{1}$ then $(u_{i}\wedge u_{I})\otimes u_{J}\in W_{\ell(x)}$
by definition.
If $i\leq\lambda_{1}$ and $x(i)=0$, then $(u_{i}\wedge u_{I})\otimes u_{J}\,s=(u_{m}\wedge u_{I})\otimes u_{J}$
where $s=(i,m)\in R_{\lambda}$. Consequently,
\[
\big((u_{i}\wedge u_{I})\otimes u_{J}\big)c_{\lambda}=\big((u_{i}\wedge u_{I})\otimes u_{J}\big)sc_{\lambda}=\big((u_{m}\wedge u_{I})\otimes u_{J}\big)c_{\lambda}=\varepsilon w_{x}c_{\lambda}.
\]
The claim follows.
\end{proof}
\noindent The claim proves Lemma \ref{lem:enough_to_show_uninteresting_first_row}.
\end{proof}

\begin{cor}
\label{cor:simple_hooks_even}Let $\lambda\vdash n$ be a Young diagram
of the form $\lambda=(n-m,1^{m})$ and $x\in X_{k,k}$. Then 
\[
w_{x}c_{\lambda}\equiv
(-1)^{\lfloor\frac{m}{2}\rfloor}w_{t\circ x}c_{\lambda} \quad \mathrm{mod} \ K. 
\]
\end{cor}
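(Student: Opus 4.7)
The plan is to mirror the structure of the proof of Prop.~\ref{prop:sym_even}: induct on $\ell(x) = |x^{-1}(\{1,2\})\cap[n-m]|$, reducing via Lemma~\ref{lem:enough_to_show_uninteresting_first_row} to the base case $\ell(x)=0$, in which Lemma~\ref{lem:simplified_lemma} applies directly. The congruence modulo $K$ propagates through the induction because, when $k=l$, the color-flip $T$ of Cor.~\ref{cor:T iso} is simply the tensor swap $a\otimes b\mapsto b\otimes a$; since this swap commutes with the projection $(\Lambda^{k}U)^{\otimes 2}\twoheadrightarrow(\Lambda^{k}V)^{\otimes 2}$, we have $T(K)=K$.

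For the base case $\ell(x)=0$ one may assume $w_x c_\lambda\neq 0$. The first row of $x$ then contains only colors $0$ and $3$, so $(xa)(1)\in\{0,3\}$ for every $a\in R_\lambda$. If the tail of $\lambda$ carried at least two entries of color $0$ or $3$, the first column of $xa$ would contain at least three such entries uniformly in $a$, contradicting Lemma~\ref{lem:two_same}/3. Hence the tail has at most one $0$/$3$ entry, and since $x\in X_{k,k}$ forces $|x^{-1}(1)|=|x^{-1}(2)|$, solving $m=2p+r$ with $r\in\{0,1\}$ gives $p=\lfloor m/2\rfloor$ ones and $\lfloor m/2\rfloor$ twos on the tail regardless of the parity of $m$. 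I would take $a_0=\mathrm{id}$ and let $b_0\in C_\lambda$ be the order-preserving product of disjoint transpositions pairing tail $1$'s with tail $2$'s. The verifications are routine: $a_0 b_0$ is a proper swap with respect to $x$ (the lone $0$/$3$ tail entry, if any, is the only fixed point inside each swap interval and contributes $0$ to each parity count), $xb_0 = t\circ x$, and $b_0$ centralizes $R_\lambda=\mathfrak{S}_{n-m}$ since they act on disjoint index sets. Lemma~\ref{lem:simplified_lemma} then delivers
\[
w_x c_\lambda = \mathrm{sign}(b_0)\,w_{t\circ x}c_\lambda = (-1)^{\lfloor m/2\rfloor}w_{t\circ x}c_\lambda
\]
as a genuine equality in $(\Lambda^{k}U)^{\otimes 2}$, hence a fortiori modulo $K$.

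For the inductive step $\ell(x)\geq 1$, Lemma~\ref{lem:enough_to_show_uninteresting_first_row} writes $w_x c_\lambda \equiv \sum_i \alpha_i w_{y_i} c_\lambda \pmod K$ with $\ell(y_i)<\ell(x)$. Applying the $\mathfrak{S}_n$-isomorphism $T$ and using $T(K)=K$ together with $T(w_{y_i} c_\lambda)=w_{t\circ y_i} c_\lambda$ yields $w_{t\circ x} c_\lambda \equiv \sum_i \alpha_i w_{t\circ y_i} c_\lambda \pmod K$. The induction hypothesis on each $y_i$ then combines these two congruences into the required identity for $x$. The main obstacle is the base-case parity analysis: one must extract from ``$w_x c_\lambda\neq 0$'' the uniform count $\lfloor m/2\rfloor$ in both parity regimes of $m$, so that a single formula covers the even and odd hook cases simultaneously. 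Once that count is in hand, the rest is a mechanical verification of the proper-swap conditions plus a formal transport via $T$.
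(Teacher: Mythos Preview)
Your proof is correct and follows essentially the same approach as the paper's: induction on $\ell(x)$ via Lemma~\ref{lem:enough_to_show_uninteresting_first_row}, with the base case handled by Lemma~\ref{lem:simplified_lemma} using $a_0=\mathrm{id}$ and the monotone tail swap $b_0$. Your explicit justification that $T(K)=K$ (because $T$ is the tensor swap and commutes with the projection) is a useful clarification of a step the paper leaves implicit in its inductive argument; the base-case parity count and the determination of $\mathrm{sign}(b_0)=(-1)^{\lfloor m/2\rfloor}$ are likewise the same as in the paper, only spelled out in more detail.
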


\begin{proof}
By Lemma \ref{lem:enough_to_show_uninteresting_first_row}, it is
enough to prove for the case when every cell in the first row of $\lambda$
is of color $0$ or $3$. Indeed, if we show that then we may prove
the statement by induction on $\ell(x)$. The case of $\ell(x)=0$
is our assumption. If we know the statement for all $y$ with
$\ell(y)<\ell(x)$ then we may express $w_{x}c_{\lambda}$ as a sum
of $w_{y}c_{\lambda}$ modulo $K$.

Assume that every cell in the first row of $\lambda$ is of color
$0$ or $3$. The number of cells of color $1$ and $2$ on the tail
of $\lambda$ are the same, by $x\in X_{k,k}$. Therefore, we may apply
Lemma \ref{lem:simplified_lemma} with $a_{0}=\mathrm{id}$ and $b_{0}$
the product of disjoint transpositions swapping the cells of color
1 with the cells of color $2$ in a monotonic way. To determine $\mathrm{sign}(b_{0})$,
note that there are at most two cells in the first column of $\lambda$
that are of color $0$ or $3$, by Lemma \ref{lem:two_same}/2, but at least
one, by the assumption on the first row. Therefore, $\mathrm{sign}(b_{0})=(-1)^{\lfloor\frac{m}{2}\rfloor}$
and the claim follows.
\end{proof}

Note that Cor. \ref{cor:simple_hooks_even}
is directly connected to Prop. \ref{prop:sym_even} through the following statement: 
\begin{lem}
\label{lem:antisymmetry}Assume that $w_{x}c_{\lambda}=(-1)^{a} w_{t\circ x}c_{\lambda}$ modulo $K$ for some $a\in \mathbb{Z}$.
If $a$ is odd then the image of $w_{x}c_{\lambda}$ in 
$\mathrm{Sym}^{2}(\Lambda^{k}V)$ is zero. Similarly, if $a$ is even then the image of $w_{x}c_{\lambda}$ in 
$\Lambda^{2}(\Lambda^{k}V)$ is zero.
\end{lem}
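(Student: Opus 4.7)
The plan is to recognize the map $T$ of Corollary \ref{cor:T iso} as the ordinary tensor-swap when $k=l$, and then read off the conclusion from the fact that the swap acts by $+1$ on the symmetric square and by $-1$ on the exterior square. Concretely, if we write $w_x = u_I\otimes u_J$ with $I=x^{-1}(\{1,3\})$ and $J=x^{-1}(\{2,3\})$, then $(t\circ x)^{-1}(\{1,3\})=J$ and $(t\circ x)^{-1}(\{2,3\})=I$, so $w_{t\circ x}=u_J\otimes u_I$. Hence on $\Lambda^k U\otimes\Lambda^k U$ the map $T$ coincides with the tensor-flip $\tau(a\otimes b)=b\otimes a$.

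Next I would exploit $\mathfrak{S}_n$-equivariance. Since $T$ is a module homomorphism (Cor. \ref{cor:T iso}), it commutes with $c_{\lambda}$, so $T(w_x c_{\lambda})=T(w_x)c_{\lambda}=w_{t\circ x}c_{\lambda}$. The same tensor-flip descends to $(\Lambda^k V)^{\otimes 2}$ via the projection $\pi\otimes\pi:(\Lambda^k U)^{\otimes 2}\twoheadrightarrow (\Lambda^k V)^{\otimes 2}$, whose kernel is $K$, because $\pi\otimes\pi$ clearly commutes with the tensor-flip on its domain and codomain. Therefore, writing $\overline{\phantom{w}}$ for the image in $(\Lambda^k V)^{\otimes 2}$, the hypothesis $w_x c_{\lambda}\equiv (-1)^a w_{t\circ x}c_{\lambda}\pmod{K}$ becomes the eigenvalue equation
\[
\tau(\overline{w_x c_{\lambda}})=(-1)^{a}\,\overline{w_x c_{\lambda}}
\]
in $(\Lambda^k V)^{\otimes 2}$.

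Finally I would decompose $\overline{w_x c_{\lambda}}=v_+ + v_-$ with $v_+\in\mathrm{Sym}^2(\Lambda^k V)$ and $v_-\in\Lambda^2(\Lambda^k V)$, and use $\tau(v_+ + v_-)=v_+ - v_-$. Comparing with $(-1)^a(v_+ + v_-)$ forces $v_-=0$ when $a$ is even and $v_+=0$ when $a$ is odd, which is exactly the claim. The only step requiring any care is verifying that $T$ really is the tensor-swap on the equal-degree diagonal; once that identification is in place, the rest is a one-line eigenspace argument and there is no serious obstacle.
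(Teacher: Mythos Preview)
Your argument is correct and is essentially the same as the paper's: both rest on the observation that $w_{t\circ x}$ is the tensor-swap of $w_x$, so modulo $K$ the hypothesis says $\overline{w_xc_\lambda}$ lies in the $(-1)^a$-eigenspace of the swap. The paper compresses this into one sentence, while you spell out why $T$ agrees with the swap and why everything descends through $K$, but the content is identical.
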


\begin{proof}
It follows from the fact that the image of $w_{x}$ and $w_{t\circ x}$ (resp. $-w_{t\circ x}$)
are the same in $\mathrm{Sym}^{2}(\Lambda^{k}V)$ (resp. $\Lambda^{2}(\Lambda^{k}V)$).
\end{proof}

\begin{proof}[Proof of Prop. \ref{prop:sym_even}]
By Lemma \ref{lem:antisymmetry} it is enough to show that 
\[
w_{x}c_{\lambda}\equiv
(-1)^{\frac{d_{1}}{2}}w_{t\circ x}c_{\lambda} 
\quad \mathrm{mod} \ K.
\]
If $\ell(x)=0$ then the statement is proved in Proposition \ref{prop:first_row_uninteresting}.
If $\ell(x)>0$ then the claim is proved by induction using Lemma
\ref{lem:enough_to_show_uninteresting_first_row}, analogously to
the proof of Corollary \ref{cor:simple_hooks_even}.
\end{proof}
%

\section{Double Hooks with Odd Tail\label{sec:odd_tail}}

In this section we prove the second case of Theorem \ref{thm:main} by showing the following:

\begin{prop}\label{prop:sym_odd}
Let $\lambda\vdash n$ be a Young diagram of the form
$\lambda=(q,p,2^{d_{2}},1^{d_{1}})$ for some $q\geq p\geq2$.
If $d_{1}$ is odd then the multiplicity of $M^\lambda$ in $\mathrm{Sym}^2(\Lambda^k V)$ equals the multiplicity of $M^\lambda$ in $\Lambda^2(\Lambda^k V)$.
\end{prop}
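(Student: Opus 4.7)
The plan is to use a restriction-based recursion to reduce the odd-$d_{1}$ case to Proposition~\ref{prop:sym_even} and Remmel's theorem. Writing $V_{m}$ for the standard representation of $\mathfrak{S}_{m}$, define
\[
D_{\nu}(m,j) \;:=\; [M^{\nu} : \mathrm{Sym}^{2}(\Lambda^{j}V_{m})] \;-\; [M^{\nu} : \Lambda^{2}(\Lambda^{j}V_{m})],
\]
so that the proposition is equivalent to $D_{\lambda}(n,k)=0$ whenever $\lambda=(q,p,2^{d_{2}},1^{d_{1}})$ has $d_{1}$ odd. The key observation is that $\mathrm{Sym}^{2}$ and $\Lambda^{2}$ commute with restriction, and $\mathrm{Res}_{\mathfrak{S}_{n-1}}^{\mathfrak{S}_{n}}\Lambda^{k}V_{n} \cong \Lambda^{k}V_{n-1}\oplus\Lambda^{k-1}V_{n-1}$ (from $\mathrm{Res}\,V_{n}\cong V_{n-1}\oplus\mathbf{1}$), so the cross term $\Lambda^{k}V_{n-1}\otimes\Lambda^{k-1}V_{n-1}$ cancels in the virtual class $[\mathrm{Sym}^{2}]-[\Lambda^{2}]$, leaving
\[
\mathrm{Res}\bigl((\mathrm{Sym}^{2}-\Lambda^{2})\Lambda^{k}V_{n}\bigr) \;=\; (\mathrm{Sym}^{2}-\Lambda^{2})\Lambda^{k}V_{n-1} \;\oplus\; (\mathrm{Sym}^{2}-\Lambda^{2})\Lambda^{k-1}V_{n-1}
\]
in the Grothendieck group. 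Taking $M^{\mu}$-multiplicities and invoking the branching rule together with Frobenius reciprocity produces, for every $\mu\vdash n-1$, the recursion
\[
\sum_{\lambda'\supset\mu} D_{\lambda'}(n,k) \;=\; D_{\mu}(n-1,k) \;+\; D_{\mu}(n-1,k-1),
\]
where $\lambda'\supset\mu$ means $\lambda'$ is obtained from $\mu$ by adding a single cell.

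I would then specialize this to $\mu=(q,p,2^{d_{2}},1^{d_{1}-1})$, which has even tail $d_{1}-1$. Enumerating the addable corners of $\mu$ yields four kinds of $\lambda'$: (i) $(q+1,p,2^{d_{2}},1^{d_{1}-1})$ and (when $p<q$) $(q,p+1,2^{d_{2}},1^{d_{1}-1})$, both still with tail $d_{1}-1$; (ii) a potential non-double-hook such as $(q,p,3,2^{d_{2}-1},1^{d_{1}-1})$, on which $D$ vanishes by Remmel's theorem; (iii) when $d_{1}\geq 3$, the diagram $\lambda_{\mathrm{int}} := (q,p,2^{d_{2}+1},1^{d_{1}-2})$ with odd tail $d_{1}-2$; and (iv) $\lambda$ itself. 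By Proposition~\ref{prop:sym_even} combined with Remmel's theorem, every even-tail double hook $\nu$ satisfies $D_{\nu}=\sigma_{\nu}R_{\nu}$, where $R_{\nu}$ is Remmel's total multiplicity in the relevant $(\Lambda^{k'}V_{n'})^{\otimes 2}$ and $\sigma_{\nu}\in\{+1,-1\}$ depends only on the tail mod $4$. Since $\mu$ and the type-(i) diagrams share the common tail $d_{1}-1$, they share a single sign $\sigma$.

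Finally, I would induct on odd $d_{1}$. The recursion rearranges to
\[
D_{\lambda}(n,k) + D_{\lambda_{\mathrm{int}}}(n,k) \;=\; \sigma\bigl[\,R_{\mu}(n-1,k) + R_{\mu}(n-1,k-1) - R_{(q+1,p,2^{d_{2}},1^{d_{1}-1})}(n,k) - R_{(q,p+1,2^{d_{2}},1^{d_{1}-1})}(n,k)\,\bigr],
\]
with the last term omitted when $p=q$. At the base case $d_{1}=1$ the term $D_{\lambda_{\mathrm{int}}}$ is absent, and one verifies that the right-hand side vanishes by a finite case analysis of $s:=2k+1-n$ against the thresholds $q-p-1,\,q-p,\,q-p+1,\,q-p+2$, within each of which Remmel's piecewise formula takes an explicit value in $\{0,1,2\}$ that collectively cancel. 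For $d_{1}\geq 3$, the inductive hypothesis gives $D_{\lambda_{\mathrm{int}}}=0$, and the same Remmel identity again forces $D_{\lambda}=0$. I expect the main obstacle to be precisely this base-case combinatorial verification: although it is a routine piecewise check, it must be carried out with attention to several degenerate configurations ($p=q$, $d_{2}=0$, and boundary values of $k$ where $\Lambda^{-1}V_{n-1}$ is to be interpreted as $0$), each of which requires a brief separate calculation.
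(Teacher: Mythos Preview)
Your proposal is correct and follows essentially the same route as the paper: Frobenius reciprocity plus branching applied to $\mu=(q,p,2^{d_2},1^{d_1-1})$, together with Proposition~\ref{prop:sym_even} for the even-tail summands and Remmel's theorem for the explicit multiplicities, reducing everything to a short $\psi$-identity. The only cosmetic difference is that you pass to the virtual character $[\mathrm{Sym}^2]-[\Lambda^2]$ at the outset (so the cross term $\Lambda^k V_{n-1}\otimes\Lambda^{k-1}V_{n-1}$ cancels immediately and you verify a single Remmel identity $\psi(a+1,b)+\psi(a-1,b)=\psi(a,b+1)+\psi(a,b-1)$), whereas the paper treats each $F\in\{\mathrm{Sym}^2,\Lambda^2\}$ separately, cancels $M^{\mu[q+1]}\oplus M^{\mu[p+1]}$ against $F(\Lambda^{k}V_{n-1})\oplus F(\Lambda^{k-1}V_{n-1})$ via Corollary~\ref{cor:of_Remmel}, and then observes that the remaining right-hand side is independent of $F$.
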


The proof is based on Frobenius reciprocity, the branching rule, the fact that we already proved the case of even length tails, and that the exact multiplicities of $(\Lambda^k V)^{\otimes 2}$ are known by Remmel's theorem.

\subsection{Branching Argument}\label{subsec:branching}
Let $\mu \vdash n-1$ be a Young diagram such that the number of rows of length one is even, i.e. it has even length tail. Denote $\mathrm{Ind} = \mathrm{Ind}^{\mathfrak{S}_n}_{\mathfrak{S}_{n-1}}$, similarly for $\mathrm{Res}$, 
and let $\langle M,N\rangle$ be the usual inner product of $\mathfrak{S}_n$-representations, i.e. $\langle M,N\rangle=\dim\mathrm{Hom}_{\mathfrak{S}_n}(M,N)$.

By Frobenius reciprocity we have
\begin{equation}\label{eq:frob}
\quad\big\langle
\mathrm{Ind}\, M^{\mu}, F(\Lambda^k V)
\big\rangle=\big\langle
M^{\mu}, \mathrm{Res}\, F(\Lambda^k V)
\big\rangle\qquad F \in \{\mathrm{Sym}^2,\Lambda^2\}.
\end{equation}
By the branching rule of $\mathfrak{S}_{n}$-representations, we may decompose the left hand side of the equation as follows. For two Young diagrams $\mu\vdash (n-1)$ and $\lambda \vdash n$ let us write $\mu \nearrow \lambda$ if and only if $\lambda$ may be obtained from $\mu$ by adding a single box to it. 
With this notation the branching rule states that
\begin{equation}\label{eq:branching_general}
\mathrm{Ind}\,M^{\mu} = \sum_{\lambda:\mu \nearrow \lambda}M^{\lambda}
\end{equation}
Denote by $\mu[i]$ the Young diagram obtained from $\mu$ by adding a box to the $i$-th column, if it exists. Let 
\begin{equation}\label{eq:lambda_mu}
\lambda = (q,p,2^{d_2},1^{d_1})\qquad\textrm{and}\qquad
\mu = (q,p,2^{d_2},1^{d_1-1})
\end{equation}
for some odd $d_1$ and $q\geq p\geq2$. Then we have $\mu[1]=\lambda$ and $\mu[2]=(q,p,2^{d_2+1},1^{d_1-2})$. 
By Eq. \ref{eq:branching_general} we get:
\begin{equation}
\begin{gathered}\label{eq:induction_decomp}
\mathrm{Ind}\,M^{\mu} = 
M^{\lambda} 
\oplus (M^{\mu[2]}\textrm{ if }d_1>1)
\oplus (M^{\mu[3]}\textrm{ if }p>2)\\
\, \oplus (M^{\mu[p+1]}\textrm{ if }q > p)
\oplus M^{\mu[q+1]}
\end{gathered}
\end{equation}
The conditional terms are defined to be zero if the condition fails.

\begin{lem}\label{lem:restriction_expanded}
Denote by $V_{n-1}$ the standard (n-2)-dimensional irreducible representation of $\mathfrak{S}_{n-1}$. Then
\[
\mathrm{Res}\, F(\Lambda^k V) \cong F(\Lambda^k V_{n-1}) \oplus F(\Lambda^{k-1} V_{n-1}) \oplus 
 \big(\Lambda^k V_{n-1} \otimes \Lambda^{k-1} V_{n-1}\big)
\]
for $F \in \{\mathrm{Sym}^2,\Lambda^2\}$.
\end{lem}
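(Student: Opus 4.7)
The plan is to reduce the statement to two standard identities: the branching rule for the standard representation, and the bifunctorial decomposition of $F$ on a direct sum. Both are routine, so I expect essentially no obstacle.

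First I would recall that, by the branching rule of $\mathfrak{S}_n$-representations applied to $V=M^{(n-1,1)}$, removing a box either from the first row or from the second row yields
\[
\mathrm{Res}\,V \;\cong\; V_{n-1}\oplus \mathbf{1}_{n-1},
\]
where $\mathbf{1}_{n-1}=M^{(n-1)}$ is the trivial representation of $\mathfrak{S}_{n-1}$. Since the exterior power functor commutes with restriction, and since $\Lambda^{j}(\mathbf{1}_{n-1})=0$ for $j\geq 2$, the identity $\Lambda^{k}(A\oplus B)=\bigoplus_{i+j=k}\Lambda^{i}A\otimes \Lambda^{j}B$ specialises to
\[
\mathrm{Res}\,\Lambda^{k}V \;\cong\; \Lambda^{k}(V_{n-1}\oplus\mathbf{1}_{n-1}) \;\cong\; \Lambda^{k}V_{n-1}\,\oplus\, \Lambda^{k-1}V_{n-1}.
\]

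Next I would use the fact that $\mathrm{Sym}^{2}$ and $\Lambda^{2}$ also commute with restriction (they are defined from the tensor square together with the $\mathbb{F}\mathfrak{S}_2$-action flipping the two factors, and restriction commutes with both). Thus $\mathrm{Res}\,F(\Lambda^{k}V)\cong F(\mathrm{Res}\,\Lambda^{k}V)\cong F(\Lambda^{k}V_{n-1}\oplus \Lambda^{k-1}V_{n-1})$. I would then invoke the standard decomposition, valid in characteristic zero for both $F=\mathrm{Sym}^{2}$ and $F=\Lambda^{2}$ and for any two representations $A,B$:
\[
F(A\oplus B)\;\cong\; F(A)\,\oplus\, F(B)\,\oplus\, (A\otimes B).
\]
This follows by decomposing $(A\oplus B)^{\otimes 2}=A^{\otimes 2}\oplus B^{\otimes 2}\oplus(A\otimes B)\oplus(B\otimes A)$ and observing that the swap on the last two summands has both a $+1$ and a $-1$ eigenspace, each canonically isomorphic to $A\otimes B$ via the maps $a\otimes b\mapsto a\otimes b\pm b\otimes a$.

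Applying this to $A=\Lambda^{k}V_{n-1}$ and $B=\Lambda^{k-1}V_{n-1}$ yields
\[
\mathrm{Res}\,F(\Lambda^{k}V)\;\cong\; F(\Lambda^{k}V_{n-1})\,\oplus\, F(\Lambda^{k-1}V_{n-1})\,\oplus\, \bigl(\Lambda^{k}V_{n-1}\otimes \Lambda^{k-1}V_{n-1}\bigr),
\]
as claimed. The only step requiring any care is checking that all the identifications above are $\mathfrak{S}_{n-1}$-equivariant, but this is transparent since each isomorphism is induced by a natural transformation of functors.
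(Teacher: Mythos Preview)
Your proof is correct and follows essentially the same route as the paper: commute $\mathrm{Res}$ past $F$ and $\Lambda^{k}$, use $\mathrm{Res}\,V\cong V_{n-1}\oplus\mathbf{1}_{n-1}$ together with $\Lambda^{k}(N\oplus\mathbf{1})\cong\Lambda^{k}N\oplus\Lambda^{k-1}N$, and then apply $F(A\oplus B)\cong F(A)\oplus F(B)\oplus(A\otimes B)$. You supply a bit more justification for the last identity and for equivariance than the paper does, but the argument is otherwise the same.
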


It is at least plausible that Eq. \ref{eq:frob} together with Eq. \ref{eq:induction_decomp} and Lemma \ref{lem:restriction_expanded} completely determines the multiplicities for double hooks $\lambda$ with odd length tail. We will prove this in the next subsection.

\begin{proof}[Proof of Lemma \ref{lem:restriction_expanded}]
As $\mathrm{Res}$ commutes with $F$ and $\Lambda ^k$ we have
\[
\mathrm{Res}\, F(\Lambda^k V) 
= F(\Lambda^k \mathrm{Res}\, V)
\]
Denote by $\mathbf{1}_{n-1}$ the trivial representation of $\mathfrak{S}_{n-1}$.
By definition $\mathrm{Res}\,V\cong V_{n-1} \oplus \mathbf{1}_{n-1}$. Moreover $\Lambda^k(N\oplus \mathbf{1})\cong \Lambda^k N \oplus \Lambda^{k-1} N$ for any $N$, hence:
\[
\cong F\big(\Lambda^k (V_{n-1} \oplus \mathbf{1}_{n-1})\big)
\cong F\big(\Lambda^k V_{n-1} \oplus \Lambda^{k-1} V_{n-1}\big)
\]
Finally, one can observe that $F(N_1 \oplus N_2) = F(N_1) \oplus F(N_2) \oplus (N_1 \otimes N_2)$ for any $N_1, N_2$ and $F \in \{\mathrm{Sym}^2,\Lambda^2\}$, hence the claim of the lemma follows.
\end{proof}

\begin{rem}
The argument given above is not dependent on the parity of $d_1$ i.e. with induction-restriction we may get similar equations for $d_1$ even. In the end, one could combine this argument with a simultaneous induction on 4 variables ($n$, $q$, $p$, and $d_1$, descending on $q$ and $p$) and derive some parts of Prop. \ref{prop:sym_even} too. 

This approach would have two serious drawbacks: on one hand it wouldn't solve the case of $\lambda=(q,p,2^{d_2})$, where we would need a proof similar to the one given in Sec. \ref{sec:even_tail}. Moreover, it wouldn't explain why the mod 4 value of $d_1$ appears in the answer, while we think that Lemma \ref{lem:initial} and \ref{lem:simplified_lemma} are more insightful in this regard.
\end{rem}

\subsection{Application of Remmel's theorem\label{subsec:Remmel's-theorem}}

First let us recall Remmel's theorem:

\begin{thm}[Remmel \emph{\cite{Remmel1}}, 
Rosas \emph{\cite{Rosas}}]
\label{thm:Remmel} Let $n,k,l\in\mathbb{N}^{+}$ and $\lambda\vdash n$
a Young diagram. Then the multiplicities of $M^{\lambda}$ in $\Lambda^{k}V\otimes\Lambda^{l}V$ are the following:
\begin{itemize}
\item \setlength\itemsep{6pt}if $\lambda\underset{\vphantom{f}}{=}(q,p,2^{d_{2}},1^{d_{1}})$,
$q\geq p\geq2$ is a double hook then
\begin{itemize}
\item \setlength\itemsep{6pt}$2$, if 
$|k-l|\leq d_{1}$ and $|k+l+1-n|\leq q-p$,
\item $1$, if $|k-l|\leq d_{1}$ and $|k+l+1-n|=q-p+1$,
\item $1$, if $|k-l|=d_{1}+1$ and $|k+l+1-n|\leq q-p$,
\end{itemize}
\item 1, if $\lambda=(n-m,1^{m})$ is a hook where $|k'-l'|\leq m^{k,l}\leq k'+l'$, using the notation $u'=\min(u,n-u-1)$
and 
\[
m^{k,l}=\begin{cases}
m & \textrm{if }(k=k'\textrm{ and }l=l')\textrm{ or }(k\neq k'\textrm{ and }l\neq l')\\
n-m-1 & \textrm{otherwise,}
\end{cases}
\]
\item 0 otherwise.
\end{itemize}
\end{thm}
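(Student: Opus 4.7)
The goal is to compute the Kronecker coefficient $g^\lambda_{(n-k,1^k),(n-l,1^l)}$, equivalently the multiplicity of $M^\lambda$ in $\Lambda^k V \otimes \Lambda^l V$. The cleanest route passes through the Frobenius characteristic map, translating the multiplicity into the coefficient of $s_\lambda$ in the internal (Kronecker) product $s_{(n-k,1^k)} \ast s_{(n-l,1^l)}$.

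My first step would be to exploit $U = V \oplus \mathbf{1}$ and the identity $\Lambda^k U \cong \Lambda^k V \oplus \Lambda^{k-1} V$ to pass to the full permutation module, where the combinatorics is cleaner. In the Grothendieck group this yields
\[
[\Lambda^k V] = \sum_{i=0}^{k} (-1)^i [\Lambda^{k-i} U], \qquad [\Lambda^k V \otimes \Lambda^l V] = \sum_{i,j\geq 0} (-1)^{i+j} [\Lambda^{k-i} U \otimes \Lambda^{l-j} U],
\]
so it suffices to decompose $\Lambda^k U \otimes \Lambda^l U$ into irreducibles and then invert.

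Next I would decompose $\Lambda^k U \otimes \Lambda^l U$ using the $4$-coloring basis of Section \ref{sec:Preliminaries}. The $\mathfrak{S}_n$-orbits on $X_{k,l}$ are indexed by quadruples $(d_0,d_1,d_2,d_3)$ with $d_0+d_1+d_2+d_3 = n$, $d_1+d_3 = k$, $d_2+d_3 = l$, and by Lemma \ref{lem:epsilon_basic_props} the stabilizer of a coloring acts trivially on colors $0,3$ and by sign on colors $1,2$. Hence the orbit spans are induced modules
\[
\mathrm{Ind}_{\mathfrak{S}_{d_0}\times\mathfrak{S}_{d_1}\times\mathfrak{S}_{d_2}\times\mathfrak{S}_{d_3}}^{\mathfrak{S}_n}\bigl(\mathbf{1}\boxtimes\mathrm{sgn}\boxtimes\mathrm{sgn}\boxtimes\mathbf{1}\bigr),
\]
with Frobenius characteristic $h_{d_0}e_{d_1}e_{d_2}h_{d_3}$. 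Summing over orbits gives
\[
\mathrm{ch}\bigl(\Lambda^k U \otimes \Lambda^l U\bigr) = \sum_{s=\max(0,k+l-n)}^{\min(k,l)} h_{n-k-l+s}\, e_{k-s}\, e_{l-s}\, h_s.
\]

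I would then expand each $h_a e_b e_c h_d$ into Schur functions via Pieri's rule (equivalently via Littlewood--Richardson), feed the result into the alternating sum above, and collect the coefficient of $s_\lambda$. For $\lambda = (q,p,2^{d_2},1^{d_1})$ a double hook, the LR contributions can be parametrized by semistandard fillings of $\lambda$ with row/column constraints controlled by $d_1, q-p$, and the alternating cancellation reduces the count to an indicator comparing $|k-l|$ with $d_1$ and $|k+l+1-n|$ with $q-p$, yielding the stated values $2,1,1,0$. The hook case $\lambda=(n-m,1^m)$ also has to account for the duality $\Lambda^k V\otimes\mathrm{sgn}\cong\Lambda^{n-k-1}V$ (coming from $M^{\lambda^T}\cong M^\lambda\otimes\mathrm{sgn}$), which forces the symmetry $k\leftrightarrow n-k-1$ encoded in the definition of $m^{k,l}$.

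The main obstacle is the bookkeeping of the signed cancellation: vast numbers of LR-contributions appear with both signs and must telescope precisely to the compact formulas in the theorem. In practice it is cleaner either to isolate a sign-reversing involution on the surviving fillings (Remmel's approach in \cite{Remmel1}, using tableau rotations adapted to double hooks) or to derive a closed-form symmetric function identity for the internal product of two hook Schur functions (Rosas's approach in \cite{Rosas}); both converge on the same case analysis by $|k-l|$, $|k+l+1-n|$, and the shape parameters of $\lambda$.
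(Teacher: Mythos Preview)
The paper does not prove this theorem at all: it is quoted from \cite{Remmel1} and \cite{Rosas} as a known input (``First let us recall Remmel's theorem''), and is used as a black box in Section~\ref{sec:odd_tail}. So there is no proof in the paper to compare your proposal against.

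That said, your outline is a faithful sketch of the standard route. The identification of the orbit modules on $X_{k,l}$ with induced modules of the form $\mathrm{Ind}(\mathbf{1}\boxtimes\mathrm{sgn}\boxtimes\mathrm{sgn}\boxtimes\mathbf{1})$ and the resulting Frobenius characteristic $\sum h_{n-k-l+s}e_{k-s}e_{l-s}h_s$ are correct and match how Remmel and Rosas set things up. The inversion from $\Lambda^kU$ back to $\Lambda^kV$ via the alternating sum is also standard.

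Where your proposal stops short of being a proof is exactly the step you flag yourself: the Pieri expansion of $h_ae_be_ch_d$ followed by a four-fold alternating sum is the entire content of the theorem, and you have not carried it out. Saying that ``the alternating cancellation reduces the count to an indicator'' and then citing Remmel's involution or Rosas's closed form is circular if the goal is to prove the theorem rather than to locate it in the literature. If you want this to stand as a proof, you need to either construct the sign-reversing involution on the LR fillings explicitly (tracking the double-hook shape parameters through it), or derive and simplify the generating function identity; either one is several pages of genuine work, not a remark.
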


\begin{rem} 
The notation of the statement is an alternative version of the one used in \cite[Thm. 3]{Rosas} by M. H. Rosas, where she characterized the case of multiplicity $2$ as
\[
|k-l|\leq d_1
\qquad\textrm{and}\qquad
2p-1\leq k+l-2d_2-d_1\leq 2q-1.
\]
The latter is equivalent to $|k+l+1-n|\leq q-p$ by $q+p+2d_2+d_1=n$. 

Note also that Remmel's formulation in \cite[Thm. 2.1(b)]{Remmel1} contains a mathematical typo 
on the case of $\lambda = (r,1^{n-r})$,
as he writes $c_{\lambda} = \chi(s+t-n-1 \leq r \leq s+n-t)$ instead of $c_{\lambda} = \chi(s+t-n \leq r \leq s+n-t)$, where the characteristic function $\chi$ is defined below.
\end{rem}

Let us apply the theorem for some special cases. For any statement $P$ define $\chi(P)=1$ if $P$ is true, and $0$ otherwise, in particular $\chi(a\leq b) = 1$ if and only if $a\leq b$. Moreover, denote 
\[
\psi(a,b)=
\left\{ \begin{array}{ll}
2&\textrm{if }|a| < b\\
1&\textrm{if }|a| = b\\
0&\textrm{otherwise}
\end{array} \right.
\]
Recall the definition of $\lambda$ and $\mu[i]$ from
Eq. \ref{eq:lambda_mu}.
Using the notation of the previous paragraph, by Theorem \ref{thm:Remmel}, we have 
\begin{equation}\label{eq:psi_kk}
\big\langle
M^{\lambda},
(\Lambda^kV)^{\otimes 2}
\big\rangle = 
\psi(2k+1-n,q-p+1)
\end{equation}
Moreover,
\begin{equation}\label{eq:psi_kl}
\big\langle M^{\mu},
\Lambda^k V_{n-1} \otimes \Lambda^{k-1} V_{n-1}
\big\rangle
= \left\{ 
\begin{array}{ll}
\psi(2k+1-n,q-p+1) & \textrm{if }d_1 \geq 2\\
\chi(|2k+1-n|\leq q-p) & \textrm{if }d_1 = 1 
\end{array}
\right.
\end{equation}
where $V_{n-1}$ is the standard $(n-2)$-dimensional representation of $\mathfrak{S}_{n-1}$. 

\begin{cor}\label{cor:of_Remmel}
Let $\mu = (q,p,2^{d_2},1^{d_1-1})$ for some $q\geq p\geq 2$, $d_1$ odd. 
If $q>p$ then
\begin{equation}\label{eq:cor_psi_1}
\big\langle 
M^{\mu[q+1]}\oplus M^{\mu[p+1]},
F(\Lambda^k V)
\big\rangle 
=
\big\langle 
M^{\mu},F(\Lambda^{k-1} V_{n-1}) \oplus F(\Lambda^k V_{n-1})
\big\rangle
\end{equation}
Moreover, if $q=p$ then
\begin{equation}\label{eq:cor_psi_2}
\big\langle 
M^{\mu[q+1]},
F(\Lambda^k V)
\big\rangle
=
\big\langle M^{\mu},
F(\Lambda^{k-1} V_{n-1})\oplus F(\Lambda^k V_{n-1}) \big\rangle .
\end{equation}
\end{cor}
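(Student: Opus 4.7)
The plan is to evaluate both sides of each equation directly, combining Proposition \ref{prop:sym_even} with Remmel's theorem (Theorem \ref{thm:Remmel}) to reduce to a combinatorial identity for the function $\psi$.

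First I would observe that the three Young diagrams $\mu = (q,p,2^{d_2},1^{d_1-1})$, $\mu[q+1] = (q+1,p,2^{d_2},1^{d_1-1})$, and (when $q > p$) $\mu[p+1] = (q,p+1,2^{d_2},1^{d_1-1})$ are all double hooks sharing the same tail length $d_1 - 1$, which is even since $d_1$ is odd. Consequently Proposition \ref{prop:sym_even} applies uniformly to all three: for each such diagram $\nu$ and each $F \in \{\mathrm{Sym}^2, \Lambda^2\}$, the multiplicity $\langle M^{\nu}, F(\Lambda^j W)\rangle$---with $(W, j)$ equal to $(V, k)$, $(V_{n-1}, k)$, or $(V_{n-1}, k-1)$---either vanishes or equals the full $\langle M^{\nu}, (\Lambda^j W)^{\otimes 2}\rangle$, determined by $(d_1-1)/2 \bmod 2$ and $F$. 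Since this parity is common to all three diagrams, the ``vanishing $F$'' coincides on both sides of the corollary: for that choice of $F$ both sides are zero, and for the other choice both sides reduce to their full tensor-square multiplicities.

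For the non-trivial $F$, the claim therefore reduces to
\[
\big\langle M^{\mu[q+1]}, (\Lambda^k V)^{\otimes 2}\big\rangle + \big\langle M^{\mu[p+1]}, (\Lambda^k V)^{\otimes 2}\big\rangle = \big\langle M^{\mu}, (\Lambda^k V_{n-1})^{\otimes 2}\big\rangle + \big\langle M^{\mu}, (\Lambda^{k-1} V_{n-1})^{\otimes 2}\big\rangle,
\]
an equation that no longer depends on $F$. Applying Remmel's theorem in the form of equation (\ref{eq:psi_kk}) to each term (with $n' = n-1$ on the right), and writing $a = 2k+1-n$, $b = q-p+1$, this becomes
\[
\psi(a, b+1) + \psi(a, b-1) = \psi(a+1, b) + \psi(a-1, b),
\]
which is readily verified by splitting into the cases $|a| \leq b-2$, $|a| = b-1$, $|a| = b$, $|a| = b+1$, and $|a| \geq b+2$.

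The $q = p$ case proceeds identically except that $\mu[p+1]$ is absent, reducing the identity to its $b = 1$ boundary form $\psi(a, 2) = \psi(a+1, 1) + \psi(a-1, 1)$, again verified by direct inspection. The main obstacle will be the bookkeeping: keeping track of the correct $\psi$-parameters for each of the (up to four) diagrams, and in particular remembering that passing from $\mathfrak{S}_n$ to $\mathfrak{S}_{n-1}$ shifts the first argument of $\psi$ by $\pm 1$---precisely what makes the recurrence fire. The $\psi$-identity itself is elementary.
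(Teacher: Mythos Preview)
Your proof is correct and follows essentially the same approach as the paper: use Proposition~\ref{prop:sym_even} on all three double hooks (which share the even tail length $d_1-1$) to reduce both sides to full tensor-square multiplicities, then apply Remmel's theorem and verify the resulting $\psi$-identity. The only cosmetic difference is that the paper pairs the terms crosswise and shows the two differences are $\pm\psi(2k-n+q-p+2,1)$, whereas you verify the equivalent four-term identity $\psi(a,b+1)+\psi(a,b-1)=\psi(a+1,b)+\psi(a-1,b)$ directly; your formulation is arguably cleaner.
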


\begin{proof}
First assume that either $F=\mathrm{Sym}^2$ and $(d_1-1)\equiv 2 \textrm{ mod } 4$ or $F=\Lambda^2$ and $(d_1-1) \equiv 0 \textrm{ mod } 4$. As $\mu$ has even tail we may apply by Prop. \ref{prop:sym_even}, and so both sides of Eq. \ref{eq:cor_psi_1} and \ref{eq:cor_psi_2} are zero.

Now assume that $F$ and $d_1$ are not as above. Then by Prop. \ref{prop:sym_even}:
\[
\big\langle M^{\mu}, F(\Lambda^k V_{n-1}) \big\rangle = 
\big\langle M^{\mu}, (\Lambda^k V_{n-1})^{\otimes 2}\big\rangle.
\]
Therefore, by Eq. \ref{eq:psi_kk} we get
\begin{align*}
\big\langle 
M^{\mu[q+1]},
F(\Lambda^k V)
\big\rangle 
=& \psi(2k+1-n,(q+1)-p+1)
\\
\big\langle 
M^{\mu},
F(\Lambda^{k-1} V_{n-1})
\big\rangle 
=& \psi(2(k-1)+1-(n-1),q-p+1)
\end{align*}
It is easy to see that if $a,b$ are integers such that $b\geq 2$ then
\[\psi(a,b) - \psi(a-1,b-1) = \psi(a+b-1,1)\] 
Hence, we get
\[
\big\langle 
M^{\mu[q+1]},
F(\Lambda^k V)
\big\rangle - 
\big\langle 
M^{\mu},
F(\Lambda^{k-1} V_{n-1})
\big\rangle =
\psi(2k-n+q-p+2,1)
\]
Similarly, we have
\begin{gather*}
\big\langle 
M^{\mu[p+1]},
F(\Lambda^k V)
\big\rangle 
-
\big\langle 
M^{\mu},
F(\Lambda^k V_{n-1})
\big\rangle 
= 
\\
= \psi(2k+1-n,q-p)
- \psi(2k+2-n,q-p+1)
\\
= - \psi(2k-n+q-p+2,1)
\end{gather*}
so the first statement follows.

For Eq. \ref{eq:cor_psi_2} an analogous computations yields
\begin{gather*}
\big\langle 
M^{\mu[q+1]},
F(\Lambda^k V)
\big\rangle
-
\big\langle M^{\mu},
F(\Lambda^{k-1} V_{n-1})\oplus F(\Lambda^k V_{n-1}) \big\rangle 
=\\
=\psi(2k+1-n,2)-\psi(2k-n,1)-\psi(2k+2-n,1)
\end{gather*}
so the claim follows from $\psi(x,2)-\psi(x-1,1)-\psi(x+1,1)=0$.
\end{proof}

Now we may prove the main proposition of the section:

\begin{proof}[Proof of Prop. \ref{prop:sym_odd}]
Let us derive recursive equations on the multiplicities. 
First assume that $d_1 > 1$, $F \in \{\mathrm{Sym}^2,\Lambda^2\}$, and consider Eq. \ref{eq:frob}:
\[
\big\langle
\mathrm{Ind}\, M^{\mu}, F(\Lambda^k V)
\big\rangle
= \big\langle
M^{\mu}, \mathrm{Res}\, F(\Lambda^k V)
\big\rangle.
\]
Expand the left hand side by Eq. \ref{eq:induction_decomp} and the right hand side by Lemma \ref{lem:restriction_expanded}, and subtract the appropriate equation in Cor. \ref{cor:of_Remmel} (depending on whether $q=p$):
\begin{gather}\label{eq:mainproof_d>1}
\big\langle
M^{\lambda}\oplus M^{\mu[2]},
F(\Lambda^k V)
\big\rangle
= 
\big\langle M^{\mu},
\Lambda^k V_{n-1} \otimes \Lambda^{k-1} V_{n-1}
\big\rangle 
=\\
\overset{\ref{eq:psi_kl}}{=}
\psi(2k+1-n,q-p+1)\notag
\end{gather}
using that $\mu[1]=\lambda$ and $\langle M^{\mu[3]},F(\Lambda^k V)\rangle = 0$ by Theorem \ref{thm:Remmel} (assuming $p>2$ so $M^{\mu[3]}$ is defined). Similarly, if $d_1=1$ then we get
\begin{equation}\label{eq:mainproof_d=1}
\big\langle
M^{\lambda},
F(\Lambda^k V)
\big\rangle
= \chi(|2k+1-n|\leq q-p)
\end{equation}
Note that the right hand sides of Eq. \ref{eq:mainproof_d>1} and \ref{eq:mainproof_d=1} 
are independent of whether $F=\mathrm{Sym}^2$ or $F=\Lambda^2$. As these equations uniquely determine each multiplicity by induction on $d_1$, the claim follows.
\end{proof}

\subsection{Proof of the main theorem}

\begin{proof}[Proof of Theorem \ref{thm:main}]
Let $\lambda\vdash n$ be a Young diagram of the form $\lambda=(q,p,2^{d_2},1^{d_1})$. 
If $d_1$ is even, then the statement follows from Prop. \ref{prop:sym_even}. 

If $d_1$ is odd, then by Theorem \ref{thm:Remmel} we know that $\langle M^{\lambda},(\Lambda^k V)^{\otimes 2}\rangle\leq 2$. 
On the other hand, by Prop. \ref{prop:sym_odd} the multiplicity of the symmetric and the exterior part are the same, so the multiplicity is either zero or one in both.

If $\lambda = (n-m,1^m)$ then by Corollary \ref{cor:simple_hooks_even} and Lemma \ref{lem:antisymmetry} we get that the multiplicity of $M^\lambda$ in $\mathrm{Sym}^2(\Lambda^k V)$ is zero if $\lfloor\frac{m}{2}\rfloor$ is odd, and similarly the multiplicity in $\Lambda^2(\Lambda^k V)$ is zero if the $\lfloor\frac{m}{2}\rfloor$ is even. 
The claim follows.
\end{proof}

Corollary \ref{cor:detailed} is directly implied by Theorem \ref{thm:main} and Remmel's Theorem \ref{thm:Remmel}.


\end{document}